\documentclass[a4paper,10pt]{amsart}
\usepackage{amsmath}
\usepackage{amssymb, amsthm, xcolor,enumerate, mathtools}
\usepackage{verbatim}
\usepackage{enumitem}
\usepackage[hidelinks]{hyperref}
\usepackage{graphicx}
\usepackage{tikz, tikz-cd}
\usepackage{color}
\usepackage{xr}
\usepackage{booktabs}
\newtheorem{theorem}{Theorem}
\newtheorem{lemma}[theorem]{Lemma}
\newtheorem{proposition}[theorem]{Proposition}

\theoremstyle{definition}
\newtheorem{definition}[theorem]{Definition}

\newtheorem*{remark*}{Remark}

\numberwithin{equation}{section}
\numberwithin{theorem}{section}
\usepackage{thmtools}
\usepackage{thm-restate}
\makeatletter
\def\namedlabel#1#2{\begingroup
	#2
    \def\@currentlabel{#2}
    \label{#1}\endgroup
}
\makeatother
\newcommand{\M}{\mathcal{M}}
\newcommand{\e}{\epsilon}
\renewcommand{\d}{\delta}
\newcommand{\N}{\mathbb{N}}

\newcommand{\R}{\mathbb{R}}
\newcommand{\C}{\mathbb{C}}
\newcommand{\K}{\mathbb{K}}

\newcommand{\Z}{\mathcal{Z}}

\newcommand{\inv}{\mathrm{Inv}}
\newcommand{\Cu}{\mathrm{Cu}}

\newcommand{\Aff}{\mathrm{Aff}}

\let\oldtocsubsection=\tocsubsection
\renewcommand{\tocsubsection}[2]{\hspace{.75cm}\oldtocsubsection{#1}{#2}}
\DeclareRobustCommand{\SkipTocEntry}[5]{}
\usepackage{perpage}
\newcounter{mparcnt}
\MakePerPage{mparcnt}

\newcommand{\completion}[2]{\overline{#1}{}^{#2}}

\title[The real and stable rank of tracially complete C*-algebras]{The real and stable rank of tracially complete C*-algebras}
\author[]{Samuel Evington}
\address{Samuel Evington, Mathematical Institute, University of M\"unster, Einsteinstrasse 62, 48149 M\"unster, Germany}
\email{evington@uni-muenster.de}

\author[]{Aaron Tikuisis}
\address{Aaron Tikuisis, Department of Mathematics and Statistics, University of
  Ottawa, 585 King Edward, Ottawa, ON, K1N 6N5, Canada}
\email{aaron.tikuisis@uottawa.ca}

\thanks{This work was supported by:
Deutsche Forschungsgemeinschaft (DFG, German Research Foundation) – Project-ID 427320536 – SFB 1442 (Evington); 
Germany's Excellence Strategy EXC 2044/2 390685587  Mathematics M{\"u}nster: Dynamics–Geometry–Structure (Evington);  
ERC Advanced Grant 834267 - AMAREC (Evington);
NSERC Discovery Grant (Tikuisis), All Souls College visiting professorship (Tikuisis).
}

\begin{document}

\begin{abstract}
We prove that a factorial tracially complete C$^*$-algebra with CPoU has real rank zero and stable rank one. This leads to an essentially complete description of the Cuntz semigroup of these algebras. In particular, the results of this paper hold for the uniform tracial completions of $\mathcal{Z}$-stable C$^*$-algebras.
\end{abstract}

\maketitle

\section*{Introduction}
\renewcommand{\thetheorem}{\Alph{theorem}} 

The real rank of a C$^*$-algebra was introduced by Brown and Pedersen (\cite{BrownPed91}), based on the observation that a compact Hausdorff space $X$ has covering dimension at most $n$ if and only if every continuous function $f:X \rightarrow \R^{n+1}$ can be perturbed so that the range does not contain the origin. Brown and Pedersen's work builds on Rieffel's earlier notion of topological stable rank (\cite{Rieffel83}), which was later shown to coincide with the Bass stable rank (\cite{HV84}).

The minimal values of the real and stable rank are of particular interest as they relate to the relative abundance or scarcity of invertible elements in a given C$^*$-algebra $A$. 
Indeed, $A$ has \emph{real rank zero} if the set of invertible self-adjoint elements $\inv(A) \cap A_{sa}$ is dense in set of self-adjoint elements $A_{sa}$, and $A$ has \emph{stable rank one} if the set of invertible elements $\inv(A)$ is dense in $A$. 

Every von Neumann algebra has real rank zero (by Borel functional calculus), and every finite von Neumann algebra has stable rank one (by using unitary polar form). Moreover, there are close connections between these notations of rank and the Elliott classification programme for simple nuclear C$^*$-algebras (see the introduction to \cite{TraciallyComplete} for a survey). Indeed, every approximately finite dimensional algebra (\cite{BrownPed91}) and every Kirchberg algebra has real rank zero (\cite{Zhang90}), and every stably finite classifiable C$^*$-algebra has stable rank one (\cite{Ro04}). In particular every classifiable C$^*$-algebra has either real rank zero or stable rank one, and often both.

In this paper, we shall consider the real rank and the stable rank of factorial \emph{tracially complete C$^*$-algebras} (see Definition~\ref{def:TC} below). These C$^*$-algebras arise naturally when one completes the unit ball of a C$^*$-algebra $A$ with respect to the uniform 2-seminorm $\|a\|_{2,T(A)} = \sup_{\tau \in T(A)} \tau(a^*a)^{1/2}$. Here, $T(A)$ denotes the space of tracial states on $A$, which is assumed to be non-empty. Provided $T(A)$ is compact (which is automatic when $A$ is unital), the result of this construction is a C$^*$-algebra $\completion{A}{T(A)}$ with a bundle-like structure over $T(A)$, where the fibres are the GNS-representations $\pi_\tau$ with respect to each $\tau \in T(A)$. This viewpoint emerged from Ozawa's work on \emph{W$^*$-bundles} (\cite{Oz13}, see also \cite{Ev18}), and was subsequently generalised to arbitrary trace simplices by the authors along with Carri\'on, Castillejos, Gabe, Schafhauser, and White (\cite{TraciallyComplete}).

Tracially complete C$^*$-algebras provide a general framework for carrying out \emph{tracial gluing} arguments, which allow one to utilise properties of the von Neumann algebras $\{\pi_\tau(A)'': \tau \in T(A)\}$ to understand the structure of the tracial completion $\completion{A}{T(A)}$. Through techniques such as Matui and Sato's property (SI) (\cite{MS12,MS14}), one can in turn use the structure of $\completion{A}{T(A)}$ to prove results about the underlying C$^*$-algebra $A$. Tracial gluing arguments have underpinned recent progress on the Toms--Winter regularity conjecture (\cite{CETWW, CE, CETW}), the abstract approach to the C$^*$-classification (\cite{classification1,CETW-classification}), and dynamical notations of regularity (\cite{KLTV,SW25}).

The technical machinery for implementing tracial gluing arguments is known as \emph{complemented partitions of unity} (CPoU). Informally, a tracially complete C$^*$-algebra has CPoU provided there are (approximately) projection-valued partitions of unity over the trace space (see Definition~\ref{def:CPoU}). Under mild regularity conditions on $A$, which are satisfied when $A$ is classifiable in the sense of the Elliott classification programme, the uniform tracial completion of $A$ will have CPoU (\cite{CETWW, TraciallyComplete}). 

This brings us to the first main theorem of this paper.
\begin{theorem}\label{intro-thm:RR0}
    Let $(\M,X)$ be a factorial tracially complete C$^*$-algebra with CPoU. Then $\M$ has real rank zero.
\end{theorem}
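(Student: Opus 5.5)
Given a self-adjoint $a \in \M$ and $\e > 0$, the plan is to find an invertible self-adjoint element within $\e$ of $a$ in operator norm. I would argue in three stages: fibrewise, then locally via the tracial continuity structure, then globally by gluing with CPoU. A useful first reduction is that real rank zero is equivalent to density of self-adjoint elements with finite spectrum. So it suffices to approximate $a$ in norm by an element of the form $\sum_i \lambda_i p_i$, with $p_i$ projections in $\M$ and real $\lambda_i$. Perturbing the $\lambda_i$ slightly away from $0$ then yields an invertible element.

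In each fibre $\pi_\tau(\M)''$ (a von Neumann algebra, since $(\M,X)$ is factorial and each such fibre is a finite factor or at least a finite von Neumann algebra) I will use Borel functional calculus. This gives a spectral decomposition $\pi_\tau(a) \approx \sum_i \lambda_i e_i$, where the $e_i = \chi_{[\lambda_i,\lambda_{i+1})}(\pi_\tau(a))$ are within $\e$ in norm.

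The main task is to transfer this to $\M$, and I expect it to be the main obstacle. Two difficulties arise. First, $\|\cdot\|_{2,X}$-approximation does not control operator norm. Second, spectral projections of $a$ need not lie in $\M$, since $\M$ is only $\|\cdot\|_{2,X}$-complete on bounded sets, not weakly closed.

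My plan for the first difficulty is to take continuous functions $f_i$ with $\sum_i \lambda_i f_i(a)$ norm-close to $a$ and $0\le f_i\le 1$ forming a partition of unity. I would then aim for projections $p_i \in \M$ satisfying three conditions: they are mutually orthogonal, they sum to $1$, and they are approximately dominated by the overlap regions of the $f_i$, in the sense that $p_i$ commutes approximately with $a$ and $\|(a-\lambda_i)p_i\|$ is small in norm. The norm control must come from an exact algebraic relation rather than a 2-norm estimate. For this I would use the fact that in a tracially complete algebra with CPoU, any positive contraction can be perturbed in $\|\cdot\|_{2,X}$ to a projection lying in the hereditary subalgebra generated by a slightly larger functional-calculus element. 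Concretely, we want $p_i \in \overline{g_i(a)\M g_i(a)}$ where $g_i$ is supported on $(\lambda_i-\e,\lambda_{i+1}+\e)$; this forces $\|(a-\lambda_i)p_i\|\le 2\e$ exactly.

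The construction would proceed inductively. Using CPoU, I glue local projections obtained from the fibres. Near each $\tau$, one approximates $\chi_{[\lambda_i,\lambda_{i+1})}(a)$ by a projection inside $\M$; this uses factoriality and the fact that $\M$ is $\|\cdot\|_{2}$-dense in the fibre. I then combine these projections using the approximately central projections $q_1,\dots,q_k$ from Definition~\ref{def:CPoU}, forming $\sum_j q_j e^{(j)}_i q_j$. Finally, I correct the result to an honest projection via a standard perturbation argument in $\|\cdot\|_{2,X}$. Completeness of the unit ball (Definition~\ref{def:TC}) allows a Cauchy sequence of such corrections, each smaller in 2-norm, to converge to genuine projections $p_i$ in the appropriate hereditary subalgebras.

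With orthogonal projections $p_i$ summing to $1$, we have $a = \sum_i p_i a p_i + (\text{off-diagonal terms})$. Since $p_i a p_j$ vanishes exactly whenever the supports of $g_i$ and $g_j$ are disjoint, only neighbouring terms survive, and these are small in norm. Hence $\|a - \sum_i \lambda_i p_i\| \lesssim 3\e$. Perturbing the $\lambda_i$ to be nonzero then gives the required invertible self-adjoint element, which completes the proof that $\M$ has real rank zero.
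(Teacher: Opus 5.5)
\paragraph{Verdict.} There is a genuine gap. Constructing the projections $p_i$ is where essentially all the difficulty lies, and the steps you invoke there do not hold as stated.

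\paragraph{Where the construction breaks.}
\begin{itemize}
\item Kaplansky density only gives positive contractions in $\M$ that are $\|\cdot\|_{2,\tau}$-close to fibre projections. It does not give projections in $\M$; producing projections in $\M$ at all is the content of the theorem.
\item The glued element $\sum_j q_j e^{(j)}_i$ lives in $\M^\omega$. Any lift to $\M$ satisfies your hereditary condition only up to a $\|\cdot\|_{2,X}$-error.
\item There is no ``standard perturbation argument in $\|\cdot\|_{2,X}$'' that turns an approximate projection of $\M$ into a projection. $\M$ has no Borel functional calculus, and 2-norm estimates give no spectral gap.
\item Your iterative Cauchy sequence is the right idea, but completeness does not put the limit where you need it. Hereditary subalgebras are not $\|\cdot\|_{2,X}$-closed: take $x$ a positive contraction in a II$_1$ factor $M$ with $\ker x=0$ but $x$ not invertible. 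Then the projections $\chi_{[\delta,1]}(x)\in\overline{xMx}$ converge in 2-norm to $1\notin\overline{xMx}$.
\item The fibrewise step, Borel functional calculus of the current approximate projection, only preserves relations that hold exactly on its input. Relations that are merely approximate after each gluing therefore do not survive the iteration.
\item Exact orthogonality and $\sum_i p_i=1$ for the whole family must be maintained through all the corrections. Your sketch does not address this.
\end{itemize}

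\paragraph{The missing device and how the paper supplies it.} What is missing is a way to restore exact relations after each gluing step. The paper works with the relation $p\lhd c$ (that is, $pc=p=cp$). Unlike hereditary membership, this relation passes to $\|\cdot\|_{2,X}$-limits of contractions, and it still forces $p=cpc\in\overline{c\M c}$. The steps are as follows.
\begin{enumerate}
\item If $a\lhd p\lhd c$ holds exactly and $\|p^2-p\|_{2,\tau}\le\delta$, then $\chi_{[1-\sqrt{\delta},1]}(p)$ is a fibre projection satisfying the same relations and lying within $2\sqrt{\delta}$ of $p$.
\item CPoU gluing and lifting then give $q_0\in\M$ satisfying these relations only approximately.
\item Exactness is recovered by replacing $q_0$ with $c\big((1-a)q_0(1-a)+a\big)c$. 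The price is weakening to outer bounds $a'\lhd a$ and $c\lhd c'$.
\item Since each iteration consumes a layer, the iteration runs inside an infinite nested family $a\lhd\cdots\lhd a_{n+1}\lhd a_n\lhd b_0\lhd c_n\lhd c_{n+1}\lhd\cdots\lhd c$ of functional-calculus elements of a single $b$ with $a\lhd b\lhd c$.
\end{enumerate}
This produces a single projection $p$ with $a\lhd p\lhd c$. Real rank zero then follows from the (HP) characterisation, with no partition bookkeeping.

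\paragraph{Recovering your finite-spectrum route.} Your route can be obtained from this single-projection result. Apply it to get projections $P_i$ with $\phi_i(a)\lhd P_i\lhd\psi_i(a)$, for cut-off functions arranged so that $\psi_i(a)\lhd\phi_{i+1}(a)$. This forces $P_i\le P_{i+1}$, and $p_i=P_i-P_{i-1}$ then lies under a narrow spectral window of $a$.
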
 
The conclusion that $\M$ has real rank zero is stronger than previously known results giving real rank zero of the tracial ultrapower $\M^\omega$ (\cite{TraciallyComplete, Fu26}). 
Indeed, to prove that $\M$ has real rank zero, one must achieve approximations in operator norm $\|\cdot\|$, whereas for proving that the tracial ultrapower $\M^\omega$ has real rank zero approximations with respect to the weaker uniform 2-norm $\|\cdot\|_{2,X}$ are sufficient. Accordingly, the proof of Theorem~\ref{intro-thm:RR0} is substantially harder than typical tracial gluing arguments based on CPoU (such as those in \cite[Section 7]{TraciallyComplete}).

Our approach is to show that every hereditary C$^*$-subalgebra $B$ of $\M$ contains a (not necessarily increasing) approximate unit consisting of projections. This property, first studied under the name property (HP) in \cite{Ped80}, was shown to be equivalent to real rank zero in Brown and Pedersen's original paper on real rank zero (\cite{BrownPed91}). In fact, it is enough to show that for any $x \in B$ and $\e > 0$ there is a projection $p \in B$ such that $\|px - x\| < \e$, using a standard argument invoking the C$^*$-identity (see \eqref{eqn:CuteTrick} below).  

The strategy is to construct a $\|\cdot\|_{2,X}$-Cauchy sequence of $\|\cdot\|_{2,X}$-approximate projections using CPoU, whose error tends to zero. By completeness, such a sequence must have a $\|\cdot\|_{2,X}$-norm limit in our tracially complete C$^*$-algebra $(\M,X)$, and it is not hard to show that the limit must be a projection. The difficulty is to ensure that this projection lies in a desired hereditary C$^*$-subalgebra $B$ (which is hard as $B$ need not be $\|\cdot\|_{2,X}$-closed) and ensure it acts like an approximate unit (which is also a $\|\cdot\|$-norm condition). 
The key idea is to use the ``acts as a unit'' relation $a \lhd b$ (see Section \ref{subsec:notation}) as an interlocutor between the $\|\cdot\|_{2,X}$-world of CPoU arguments and the $\|\cdot\|$-world of hereditary C$^*$-sublagberas. This builds on the techniques developed in \cite{Ev25} for constructing projections in hereditary subalgebras.

Motivated by this success, one might hope that similar techniques could be used to prove that factoriall tracially complete C$^*$-algebra with CPoU have stable rank one. 
This is indeed the case.
\begin{theorem}\label{intro-thm:SR1}
    Let $(\M,X)$ be a factorial tracially complete C$^*$-algebra with CPoU. Then $\M$ has stable rank one.
\end{theorem}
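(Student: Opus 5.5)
Since Theorem~\ref{intro-thm:RR0} is already available, the plan is to deduce stable rank one from real rank zero together with a tracial finiteness statement, in the style of the classical argument that a real rank zero C$^*$-algebra has stable rank one once it is stably finite in a suitable sense and has enough comparison. Concretely, I would use the characterisation (due to Rørdam and Brown--Pedersen style arguments) that for a unital C$^*$-algebra $A$ of real rank zero, $A$ has stable rank one provided that every element $x$ with a nontrivial ``kernel'' can be approximated by invertibles. Equivalently, every $x \in \M$ lies in the closure of $\inv(\M)$ as soon as, whenever $x$ fails to be invertible, one can find nonzero projections $p, q$ with $px = 0 = xq$ that are Murray--von Neumann equivalent. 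Given such $p \sim q$ via a partial isometry $v$ with $v^*v = q$ and $vv^* = p$, the element $x + \e v$ is invertible for all small $\e>0$ (after a standard perturbation of $x$ so that it vanishes exactly on $q$ and has range orthogonal to $p$), giving $\|x - (x+\e v)\| \le \e$.

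The key steps, in order, are as follows. First, given $x \in \M$ and $\e>0$, use polar-decomposition-type functional calculus: let $g_\e$ be a continuous function vanishing on $[0,\e]$ and set $x' = x g_\e(|x|)$, so that $\|x - x'\| \le \e$ and $x'$ is annihilated by the hereditary subalgebras $B_r = \overline{f_\e(|x|)\M f_\e(|x|)}$ and $B_l = \overline{f_\e(|x^*|)\M f_\e(|x^*|)}$, where $f_\e$ is supported on $[0,\e]$. If both are zero then $x$ is invertible. Second, by Theorem~\ref{intro-thm:RR0} (via property (HP)) choose projections $q \in B_r$ and $p \in B_l$ that are large in these hereditary subalgebras. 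Third, show $p$ and $q$ can be chosen with $p \sim q$. Here I would use that in the fibres $\pi_\tau(\M)''$, which are finite von Neumann algebras, one has $\tau(f_\e(|x|)) = \tau(f_\e(|x^*|))$ in the limit on the kernel projections: the spectral projections of $|x|$ and $|x^*|$ at $\{0\}$ have equal trace, and more generally $|x|$ and $|x^*|$ are unitarily equivalent in each finite fibre. Combining this with strict comparison of projections by traces in factorial tracially complete algebras with CPoU (a known consequence of CPoU from \cite{TraciallyComplete}), I would shrink $p$ slightly so that $\tau(p) < \tau(q)$ for all $\tau \in X$, get $p \precsim q$, and replace $q$ by a subprojection equivalent to $p$.

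The main obstacle is the third step: making the traces of the projections $p$ and $q$ uniformly comparable across $X$ in the operator-norm world, when the natural equality $\tau(\chi_{\{0\}}(|x|)) = \tau(\chi_{\{0\}}(|x^*|))$ lives only in the bidual fibres and the projections produced by real rank zero are not spectral projections. I would attempt to handle this by building $p$ and $q$ simultaneously from $f_\e(|x|)$ and $f_\e(|x^*|)$ via the same CPoU-Cauchy construction used for Theorem~\ref{intro-thm:RR0}, using $x$ itself (note $x f(|x|)x^* = f'(|x^*|)$-type intertwining identities) to transport the approximate projections under $q \mapsto$ the range projection of $xq$-like elements, so that the resulting limits are equivalent by construction, with the $\lhd$ relation used to keep them inside the correct hereditary subalgebras. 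If the direct equality fails, the fallback is to sacrifice a small amount of trace on one side and invoke strict comparison, which needs $p \neq 0$ to have strictly smaller trace than $q$ everywhere, which can be arranged using that $B_r \neq 0$ implies $B_l \neq 0$ only tracially, so one may need to handle separately the case where the kernel is ``tracially null''.
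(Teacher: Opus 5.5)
There is a genuine gap, and it is not confined to Step~3: the invertibility step your reduction rests on is false as stated. For $x+\e v$ to be invertible, where $v^*v=q$, $vv^*=p$ and $xq=0=px$, it is not enough that $x$ vanishes on $q$ and has range orthogonal to $p$. You need $q$ to be the \emph{whole} kernel and $p$ the \emph{whole} cokernel, so that $x$ maps $1-q$ invertibly onto $1-p$; in particular $|x|$ must be bounded below on $1-q$.

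Two examples show this. In $M_3(\C)$, take $x=e_{12}$, $q=e_{11}$, $p=e_{33}$, $v=e_{31}$: all your conditions hold, yet $x+\e v$ has rank two. In $B(H\oplus\C)$, take $x=S\oplus 0$ with $S$ the unilateral shift and $p=q=0\oplus 1$: this shows your ``equivalently'' criterion fails as a general principle.

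Your construction does not supply the needed exactness. Since $g_\e$ is continuous and vanishes on $[0,\e]$, $|x'|=|x|g_\e(|x|)$ is small on the spectral region of $|x|$ just above $\e$, which lies outside $B_r$. So in general $x'$ is not bounded below on $1-q$ for any projection $q\in B_r$. The fallback makes matters worse: after shrinking $p$ and replacing $q$ by a proper subprojection $q_0\sim p$, the nonzero projection $q-q_0$ still lies in the kernel of $x'+\e v$. Strict comparison ($\tau(p)<\tau(q)$) is therefore the wrong tool. What is needed is \emph{exact} equivalence of the exact kernel and cokernel projections, and Step~3 only offers a heuristic for that.

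Your reduction can be set up correctly using Theorem~\ref{intro-thm:RR0}:
\begin{itemize}
\item Approximate $|x|^2-\e^2$ within $\e^2/2$ by an invertible self-adjoint $k$, and put $q=\chi_{(-\infty,0)}(k)\in\M$.
\item Then $q|x|^2q\le\frac32\e^2q$ and $(1-q)|x|^2(1-q)\ge\frac12\e^2(1-q)$.
\item So $x'=x(1-q)$ satisfies $\|x-x'\|\le\sqrt{3/2}\,\e$, and it has a polar decomposition $x'=w|x'|$ inside $\M$ with $w^*w=1-q$.
\item If $q\sim 1-ww^*$ via $v$, then $u=w+v$ is a unitary with $x'=u|x'|$, hence $x'\in\overline{\inv(\M)}$.
\end{itemize}
Note that $\tau(q)=\tau(1-ww^*)$ holds automatically for every $\tau\in T(\M)$. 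So the trace bookkeeping you single out as the main obstacle disappears.

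The genuinely missing ingredient is cancellation of projections in $\M$: equal traces on $X$ should imply Murray--von Neumann equivalence in $\M$. By finiteness, this would follow from non-strict comparison, i.e.\ $\tau(e)\le\tau(f)$ on $X$ implying $e\precsim f$. In the type II$_1$ case it can be extracted from $V(\M)\cong\Aff(X,\R^+_0)$ \cite[Corollary~7.19]{TraciallyComplete}, which would give a genuinely different proof via real rank zero. In general you would have to prove it: fibrewise equivalence via the centre-valued trace (using that $X$ is a face), then CPoU gluing, then an exact lift through a $\|\cdot\|_{2,X}$-Cauchy sequence of unitaries.

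That last device is what the paper uses, but directly and independently of real rank zero. Lemma~\ref{lem:ControlledPolar} and Proposition~\ref{prop:ControlledPolarCPoU} give tracially glued unitary polar forms for the cut-downs $a^{[\e]}$. The iteration $a_{n+1}=u_n^*a_n^{[\e_n]}$ has summable norm errors $\|a_n-a_n^{[\e_n]}\|\le\e_n$. Together with the $\|\cdot\|_{2,X}$-closedness of norm balls, this yields $\|a-u_0ua_\infty\|\le\alpha$ with $u$ unitary and $a_\infty\ge 0$.
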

This time the focus is on unitaries, rather than on projections. More precisely, the aim is to find elements $a$ with a unitary polar form $a=u|a|$ for some unitary $u$. Any such element $a$ is then $\e$-close to the invertible element $u(|a|+\e)$.

In a tracial von Neumann algebra $(M,\tau)$, every element $a \in M$ has a (not necessarily unique) unitary polar form. However, there is no $\|\cdot\|_{2,\tau}$-continuous map $a \mapsto u(a)$ such that $a=u(a)|a|$ for all $a \in M$. We get around this by replacing $a$ with a $\|\cdot\|$-perturbation $a^{[\e]}$, defined using functional calculus (see Definition~\ref{def:perturbation}), before choosing a unitary polar form. This leads to $\|\cdot\|_{2,\tau}$-estimates that can be tracially glued together with CPoU. Unfortunately, this only results in a $\|\cdot\|_{2,X}$-approximation to $a^{[\e]}$ with a unitary polar form. However, an iterative version of this argument enables us to find an invertible element near $a$ using the fact that a $\|\cdot\|_{2,X}$-Cauchy sequence of unitaries in $\M$ converges to a unitary. 

As an application of Theorem~\ref{intro-thm:RR0}, we provide a new proof that the trace problem has a positive solution for type II$_1$ factorial tracially complete C$^*$-algebras with CPoU (first shown in \cite{Ev25}).
\begin{theorem}\label{intro-thm:TP}
      Let $(\M,X)$ be a type II$_1$ factorial tracially complete C$^*$-algebra with CPoU. Then $T(\M) = X$.
\end{theorem}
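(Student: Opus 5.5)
We have to show $T(\M)\subseteq X$; the inclusion $X\subseteq T(\M)$ is immediate, since each $\tau\in X$ is a tracial state on $\M$. So fix a tracial state $\sigma$ on $\M$. The goal is to show $\sigma$ is $\|\cdot\|_{2,X}$-continuous on the unit ball, and then that it lies in the closed face $X$. The plan is to reduce to projections by Theorem~\ref{intro-thm:RR0}, and then use the type II$_1$ hypothesis to compare projections via their values under $X$.

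\textbf{Step 1: reduce to projections.} By Theorem~\ref{intro-thm:RR0}, $\M$ has real rank zero. Hence every self-adjoint element is a norm limit of self-adjoint elements with finite spectrum, that is, of finite real combinations of orthogonal projections. Since $\sigma$ is norm continuous, $\sigma$ is determined by its values on projections. It therefore suffices to show that for every projection $p\in\M$ we have
\[
\sigma(p)=\tau_p := \text{a value computed from } X.
\]
More precisely, we must find a fixed $\tau\in X$, or a fixed measure on $X$, whose value on $p$ equals $\sigma(p)$ for all $p$ at once.

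\textbf{Step 2: comparison of projections via CPoU.} Using CPoU together with the type II$_1$ assumption, I would establish strict comparison of projections in $\M$ with respect to $X$. Concretely, if $\tau(p)<\tau(q)$ for all $\tau\in X$, then $p\precsim q$ by a partial isometry in $\M$. The argument is to work fibrewise: in each II$_1$ von Neumann algebra $\pi_\tau(\M)''$ comparison holds. One then glues the partial isometries with CPoU into a $\|\cdot\|_{2,X}$-approximate partial isometry, and iterates to get a $\|\cdot\|_{2,X}$-Cauchy sequence whose limit is an exact partial isometry. This follows the same Cauchy-sequence technique used for Theorems~\ref{intro-thm:RR0} and~\ref{intro-thm:SR1}. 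Factoriality is used so that the relevant trace functions on $X$ behave affinely and continuously, so the strict inequality is uniform on $X$.

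Type II$_1$ plus CPoU also gives divisibility: for each $n$ there exist $n$ mutually equivalent orthogonal projections summing to $1$. Combining divisibility with comparison shows that $\sigma(p)$ is pinned between $k/n$ and $(k+1)/n$ whenever $\tau(p)$ lies in that window for all $\tau\in X$. This already yields $\sigma(p)=\tau(p)$ when $p$ has constant trace on $X$.

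\textbf{Step 3: non-constant traces.} For general $p$ the function $\hat p(\tau)=\tau(p)$ is a continuous affine function on $X$. By CPoU and divisibility, I would show that projections realize many continuous affine functions on $X$. The aim is to build partitions of unity by projections $e_1,\dots,e_m$, approximately central, such that $\hat e_i$ approximates the partition of unity on $X$. Then $p\approx\sum_i pe_i$, with each $pe_i$ having approximately constant trace relative to $e_i$. Applying the comparison of Step 2 to these pieces bounds $\sigma(p)$ by $\sum_i c_i\sigma(e_i)$. From this we deduce that $\sigma$ restricted to the centre-like data defines a probability measure on $X$ whose barycentre $\tau_\sigma\in X$ agrees with $\sigma$ on projections; here we use that $X$ is a closed face. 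Step 1 then gives $\sigma=\tau_\sigma\in X$.

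\textbf{Main obstacle.} I expect the hardest parts to be the norm-exact comparison in Step 2 and the control of $\sigma$ on the approximately central partition in Step 3. The difficulty in both is that $\sigma$ is only norm-continuous, so $\|\cdot\|_{2,X}$-approximations must be converted into exact Murray–von Neumann relations before $\sigma$ can be applied.
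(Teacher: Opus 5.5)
\noindent There is a genuine gap, and it sits in Step~3, which is where the actual content of the theorem lies. Your Step~1 (reducing to projections via real rank zero) is exactly how the paper finishes, and the constant-trace case in Step~2 is fine. The mechanism you propose for non-constant $\hat p$, however, cannot work as described, for three reasons.

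First, the CPoU projections $e_i$ live in $\M^\omega\cap\M'$, and after lifting they are only approximately central in $\|\cdot\|_{2,X}$. So statements such as $p\approx\sum_i pe_i$, or ``$pe_i$ has approximately constant trace relative to $e_i$'', hold only in $\|\cdot\|_{2,X}$; moreover $pe_i$ is not even a projection. Since $\sigma$ is only known to be norm-continuous, and $\|\cdot\|_{2,X}$-control of $\sigma$ is essentially what is to be proved, these approximations give no bound on $\sigma(p)$.

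Second, for any projection $e$ the function $\hat e(\tau)=\tau(e)$ is affine on $X$. Hence the $\hat e_i$ cannot uniformly approximate a partition of unity of $X$ by small bump functions, since such functions are far from affine. The localisation in CPoU is with respect to the limit traces $X^\omega$, not pointwise on $X$.

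Third, the ``probability measure on $X$'' is never actually constructed. Nothing you describe produces a well-defined, additive object independent of the choice of the $e_i$. (Also, a barycentre lies in $X$ because $X$ is compact and convex; the face property plays no role there.)

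The missing idea is that no measure needs to be built by hand: pass through $K_0$ and use Kadison duality. For a type II$_1$ factorial tracially complete C$^*$-algebra with CPoU, pairing with $X$ gives an isomorphism of ordered groups with order unit
$(K_0(\M),K_0(\M)_+,[1_\M])\cong(\Aff(X,\R),\Aff(X,\R^+_0),1)$ \cite[Corollary~7.19]{TraciallyComplete}. This is the correct, and much stronger, replacement for your Step~2. It gives not just strict comparison but also that $\sigma(p)$ depends only on $\hat p$, and that every positive continuous affine function is realised. These are precisely what is needed for $\hat p\mapsto\sigma(p)$ to be well defined and additive.

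Then $\sigma_*$ is an order-preserving group homomorphism $\Aff(X,\R)\to\R$ with $\sigma_*(1)=1$. It is automatically $\mathbb{Q}$-linear, and positivity plus unitality force $\R$-linearity and $|\sigma_*(f)|\le\|f\|_\infty$. So $\sigma_*$ is a state on the complete order unit space $\Aff(X,\R)$, and by Kadison duality \cite{Kadison51} it is evaluation at some $\tau_0\in X$. Thus $\sigma$ and $\tau_0$ agree on all projections, and your Step~1 then gives $\sigma=\tau_0\in X$.
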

Indeed, since $\M$ has real rank zero, it suffices to show that every $\tau \in T(\M)$ agrees with some $\tau' \in X$ on all projections in $\M$. However, this is a consequence of Kadison duality (\cite{Kadison51}) since $K_0(\M)$ is isomorphic to the space of continuous affine functions $\Aff(X, \R)$.

Further, using Theorems~\ref{intro-thm:RR0} and \ref{intro-thm:SR1}, we provide a complete description of Cuntz subequivalence and the Cuntz semigroup of a type II$_1$ factorial tracially complete C$^*$-algebra with CPoU. The following theorem summaries the main observations (see Section~\ref{subsec:cuntz} for the relevant definitions).  
\begin{theorem}\label{intro-thm:Cuntz}
    Let $(\M,X)$ be a type II$_1$ factorial tracially complete C$^*$-algebra with CPoU. Then
    \begin{enumerate}
        \item For any, $a, b \in \M_+$, $a \precsim b$ if and only for every $\e> 0$ there exists a unitary $u \in \M$ such that $u(a-\epsilon)_+u^* \in \overline{bAb}$.
        \item The Cuntz semigroup $\Cu(\M)$ is algebraic and its compact part is isomorphic to $V(\M) \cong \Aff(X,\R^+_0)$.
        \item The Cuntz semigroup $\Cu(\M)$ is almost unperforated and almost divisible, i.e.\ $\M$ is pure.  
    \end{enumerate}
\end{theorem}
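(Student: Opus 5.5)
We would prove the three parts in the order (1), (2), (3), using Theorems~\ref{intro-thm:RR0}, \ref{intro-thm:SR1} and \ref{intro-thm:TP} as the main inputs.

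\emph{Part (1).} We would use the known characterisation of Cuntz subequivalence in stable rank one. For a C$^*$-algebra of stable rank one, $a \precsim b$ holds if and only if for every $\e>0$ there is a unitary $u$ in the minimal unitisation with $u(a-\e)_+u^* \in \overline{b\M b}$. This is the result of R{\o}rdam (and Ciuperca--Robert--Santiago) based on the fact that in stable rank one, $x^*x \in \overline{b\M b}$-type relations can be implemented by unitaries via polar decomposition approximations. Since $\M$ is unital, by Theorem~\ref{intro-thm:SR1} this applies directly. The converse direction is trivial, since $(a-\e)_+ \sim u(a-\e)_+u^*$.

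\emph{Part (2).} By Theorem~\ref{intro-thm:RR0}, every hereditary subalgebra of $\M\otimes\K$ has an approximate unit of projections. Hence every element of $\Cu(\M)$ is a supremum of classes of projections. Moreover, in stable rank one, projection classes are compact, and compact elements come from projections (Brown--Ciuperca). So $\Cu(\M)$ is algebraic with compact part $V(\M)$.

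To identify $V(\M)$ with $\Aff(X,\R^+_0)$, we would proceed in three steps:
\begin{itemize}
\item[(a)] Stable rank one gives cancellation, so $V(\M)$ embeds in $K_0(\M)$.
\item[(b)] $K_0(\M)\cong\Aff(X,\R)$ via the pairing with $X$. This is the fact used for Theorem~\ref{intro-thm:TP}, coming from CPoU and factoriality, where projections with prescribed continuous affine traces exist in type II$_1$.
\item[(c)] The positive cone corresponds to $V(\M)$. For a projection $p$, the function $\hat p(\tau)=\tau(p)$ is nonnegative. Conversely, every $f\in\Aff(X,\R^+_0)$ is realised by a projection in some matrix amplification.
\end{itemize}
One must also check injectivity on $V$: if $\tau(p)=\tau(q)$ for all $\tau\in X$, then $p\sim q$. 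This should follow by comparison in factorial tracially complete algebras with CPoU, namely strict comparison of projections by traces plus cancellation.

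\emph{Part (3).} Using (1) and (2), we would identify $\Cu(\M)$ with a semigroup of lower semicontinuous-type functions on $X$ generated by sups of continuous affine functions. Concretely, $[a]$ is determined by the rank function $d_\tau(a)$ for $\tau\in X$; here $X=T(\M)$ by Theorem~\ref{intro-thm:TP}.

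The key claim is strict comparison: if $d_\tau(a)<d_\tau(b)$ for all $\tau$, then $a\precsim b$. We would prove it as follows:
\begin{itemize}
\item approximate $(a-\e)_+$ by a projection $p$, and find a projection $q\le$ a unit for $b$ in $\overline{b\M b}$, using real rank zero and compactness of $X$ together with lower semicontinuity of $d_\tau(b)$;
\item arrange that $\tau(p)<\tau(q)$ uniformly on $X$;
\item apply the comparison of projections from (2).
\end{itemize}
Almost unperforation follows at once from strict comparison. Almost divisibility follows because $\Aff(X,\R^+_0)$ is divisible, and each element is a sup of projections whose halves exist.

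The main obstacle is this strict comparison step, specifically producing projections under $b$ with trace uniformly close to $d_\tau(b)$. We would try Dini's theorem on the increasing continuous functions $\tau\mapsto\tau(f_n(b))$, combined with real rank zero to replace $f_n(b)$ by projections.
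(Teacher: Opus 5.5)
Parts (1) and (2) follow the paper's route. Part (1) is R{\o}rdam's characterisation of $\precsim$ in stable rank one. Part (2) combines three ingredients: algebraicity from real rank zero via approximate units of projections; the identification of compact elements with projection classes in the stably finite case; and the ordered isomorphism $K_0(\M)\cong\Aff(X,\R)$ together with cancellation. Your extra ``injectivity'' check is redundant, since it follows from cancellation plus injectivity of $K_0(\M)\to\Aff(X,\R)$. In any case, strict comparison could not give $p\sim q$ from $\tau(p)=\tau(q)$.

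Part (3) has a genuine gap. First, the claim that $[a]$ is determined by $\tau\mapsto d_\tau(a)$ is false. Already when $\M$ is a II$_1$ factor, compare a projection $p$ of trace $\tfrac12$ with $a=\sum_j 2^{-j}r_j$, where the $r_j$ are orthogonal projections of trace $2^{-j-1}$. These have the same rank function, but $[a]\neq[p]$: $[p]$ is compact, whereas $[a]$ is a supremum of classes of projections of trace $<\tfrac12$.

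More importantly, ``almost unperforation follows at once from strict comparison'' fails in this non-simple setting. From $(k+1)x\le ky$ you only get $d_\tau(x)\le\tfrac{k}{k+1}d_\tau(y)$, which is not strict at traces with $d_\tau(y)\in\{0,\infty\}$. Concretely, suppose $X$ is not a singleton. Take a nonzero $f\in\Aff(X,\R^+_0)$ vanishing at some $\tau_0$, and projections $p,q$ over $\M$ with $\hat p=\tfrac{k}{k+1}f$ and $\hat q=f$. Then $(k+1)[p]\le k[q]$, but your strict comparison hypothesis fails at $\tau_0$.

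The repair, which is the paper's argument, avoids rank functions altogether:
\begin{enumerate}
\item By (2), write $x=\sup_n x_n$ and $y=\sup_m y_m$ with $x_n,y_m$ compact.
\item Since $(k+1)x_n$ is compact and $(k+1)x_n\le\sup_m ky_m$, you get $(k+1)x_n\le ky_m$ for some $m$.
\item Unperforation of $\Aff(X,\R^+_0)$, whose pointwise order is the order on $V(\M)$, gives $x_n\le y_m\le y$.
\end{enumerate}
This uses only non-strict comparison of projections, covers the degenerate traces, and makes your Dini step unnecessary. Your divisibility sketch is fine once made precise in the same way. Use $x'\ll x$ to find a compact $x_n$ with $x'\le x_n\le x$, then write $x_n=ky$ with $y$ compact, using divisibility of $\Aff(X,\R^+_0)$.
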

The first point of Theorem~\ref{intro-thm:Cuntz} is a consequence of stable rank one first observed by R{\o}rdam in \cite{Ro92}. It implies that the Cuntz semigroup has a weak form of cancellation that simplifies the theory substantially; see \cite{APLT22}. The second point of Theorem~\ref{intro-thm:Cuntz} is a consequence of real rank zero first observed in \cite{CEI08} and formalised in \cite[Section 5.5]{RFT18}. It follows that $\Cu(\M)$ is isomorphic to the Cu-completion of $\Aff(X,\R^+_0)$, i.e.\ the Cu-semigroup obtained from $\Aff(X,\R^+_0)$ by adding formal suprema of increasing sequences. This means that $\Cu(\M)$ can be completely understood by analysing the ordered semigroup $\Aff(X,\R^+_0)$. The proof of the third point of Theorem~\ref{intro-thm:Cuntz} is an example of this in action and provides a new class of pure C$^*$-algebras (\cite{Wi12, APTVarxiv}).

\renewcommand{\thetheorem}{\arabic{theorem}}
\numberwithin{theorem}{section}

\section{Preliminaries}\label{sec:prelims}

\subsection{Notation}\label{subsec:notation}
Let $A$ be a C$^*$-algebra. 
Write $A_+$ for the positive elements in $A$ and $A_{+,1}$ for the positive contractions. For $a,b \in A_+$, write $a \lhd b$ to mean $ab = a = ba$. For $0 \leq r < s \leq  1$, let $\eta_{r,s}:[0,\infty) \rightarrow \R$ be the continuous function defined by
    \begin{equation}\label{def:eta}
        \eta_{r,s}(x) = 
        \begin{cases} 
        0, &0 \leq x \leq r,\\
        \mathrm{affine},  &r \leq x \leq s,\\
        1, &s \leq x < \infty.\\
        \end{cases}
    \end{equation} 
Note that $\eta_{r,s}(a) \lhd \eta_{r',s'}(a)$ if $0 \leq r' < s' \leq r < s < t \leq  1$ and $a \in A_{+}$.

\subsection{Uniform 2-seminorms}
Let $A$ be a C$^*$-algebra. Let $T(A)$ denote the space of tracial states on $A$.
For each $\tau \in T(A)$, the corresponding \emph{2-seminorm} is given by $\|a\|_{2,\tau} = \tau(a^*a)^{1/2}$ for all $a \in A$.
For every non-empty compact convex subset $X \subseteq T(A)$, we define the \emph{uniform 2-seminorm} with respect to $X$ by
\begin{equation}
    \|a\|_{2,X} = \sup_{\tau \in X} \|a\|_{2,\tau}
\end{equation}
for $a \in A$. Although $\|\cdot\|_{2,X}$ is not sub-multiplicative in general, we have 
\begin{equation}
    \|ab\|_{2,X} \leq \max(\|a\|\|b\|_{2,X}, \|a\|_{2,X}\|b\|)
\end{equation}
for all $a,b \in A$. Therefore, multiplication in $A$ is $\|\cdot\|_{2,X}$-continuous, when restricted to $\|\cdot\|$-bounded sets. For more information on uniform 2-seminorms, see \cite[Section~3.1]{TraciallyComplete}.

\subsection{Tracially complete C*-algebras}
We now recall the definition of a tracially complete C$^*$-algebra.
\begin{definition}[{\cite[Definition~3.4]{TraciallyComplete}}]\label{def:TC}
	A \emph{tracially complete C$^*$-algebra} is a pair $(\M,X)$ where $\M$ is a unital C$^*$-algebra and $X \subseteq T(\M)$ is a non-empty compact convex set such that
	\begin{enumerate}
		\item $\|\cdot\|_{2,X}$ is a norm on $\M$, and
		\item the $\|\cdot\|$-closed unit ball of $\M$ is $\|\cdot\|_{2,X}$-complete.
	\end{enumerate}
\end{definition}
A tracially complete C$^*$-algebra $(\M, X)$ is \emph{factorial} if $X$ is a face in $T(\M)$. For $\tau \in X$, let $\pi_\tau:\M \rightarrow B(L^2(\M,\tau))$ be the induced GNS-representation. We say $(\M, X)$ is \emph{type} II$_1$  if the von Neumann algebra $\pi_\tau(\M)''$ is type II$_1$ for all $\tau \in X$.

\subsection{Uniform tracial completions}
The motivating examples of tracial complete C$^*$-algebras are uniform tracially completions of C$^*$-algebras. Let $A$ be a C$^*$-algebra. For a non-empty compact convex set $X \subseteq T(A)$, the \emph{uniform tracial completion} of $A$ with respect to $X$ is the C$^*$-algebra
\begin{equation}
	\completion{A}{X} = \frac{\{(a_n)_{n=1}^\infty \in \ell^\infty(A): (a_n)_{n=1}^\infty \text{ is }\|\cdot\|_{2,X}\text{-Cauchy}\}}{\{(a_n)_{n=1}^\infty \in \ell^\infty(A): (a_n)_{n=1}^\infty \text{ is }\|\cdot\|_{2,X}\text{-null}\}}.
\end{equation}
Every $\tau \in X$ induces a trace $\hat{\tau} \in T(\completion{A}{X})$ via $\hat{\tau}((a_n)_{n=1}^\infty) = \lim_{n\rightarrow\infty} \tau(a_n)$. Hence, we may identify $X$ with a subset of $\hat{X} \subseteq T(\completion{A}{X})$. The pair $(\completion{A}{X},\hat{X})$ is a tracially complete C$^*$-algebra and is factorial whenever $X$ is a face in $T(A)$; see \cite[Section~3.3]{TraciallyComplete}.

\subsection{Ultrapowers of tracially complete C*-algebras}
Let $\omega \in \beta\N \setminus \N$ be a free ultrafilter. The \emph{ultrapower} $(\M^\omega,X^\omega)$ of the tracially complete C$^*$-algebra $(\M,X)$ with respect to $\omega$ is defined as
\begin{equation}
    \M^\omega = \frac{\ell^\infty(\M)}{\{(a_n)_{n=1}^\infty \in \ell^\infty(A): \lim_{n\to\omega}\|a_n\|_{2,X} = 0\}}.
\end{equation}
For every sequence of traces $(\tau_n)_{n=1}^\infty$ in $X$, we define a \emph{limit trace} on $\M^\omega$ via $(a_n)_{n=1}^\infty \mapsto \lim_{n\to\omega} \tau_n(a_n)$, and we set $X^\omega \subseteq T(\M)$ to be the weak$^*$-closure of the set of all limit traces. For further information, see \cite[Section 5.1]{TraciallyComplete}.  

\subsection{Complemented partitions of unity (CPoU)}

We now recall the formal definition of complemented partitions of unity (CPoU). 
\begin{definition}[{\cite[Definition~6.1]{TraciallyComplete}}]\label{def:CPoU}
	Let $(\M,X)$ be a factorial tracially complete C$^*$-algebra. We say that $(\M,X)$ has \emph{complemented partitions of unity} (CPoU) if for any $\|\cdot\|_{2,X}$-separable subset $S \subseteq \M$, any family $a_1,\dots,a_n$ of positive elements in $\M$, and any scalar
	\begin{equation}\label{eq:CPoUTraceIneq1}
		\delta>\sup_{\tau \in X} \min_{1 \leq j \leq n}\tau(a_j),
	\end{equation}
	there exist orthogonal projections $e_1,\dots,e_n\in \M^\omega\cap S'$ summing to $1_{\M^\omega}$ such that
	\begin{equation}\label{eq:CPoUTraceIneq2}
		\tau(a_je_j)\leq \delta\tau(e_j)
	\end{equation}
        for all $\tau\in X^\omega$ and $j=1,\dots,n$.
\end{definition}
In this definition, $\omega$ can be taken to be any free ultrafilter on $\N$. For  
a proof that the definition is independent of the choice of ultrafilter, see \cite[Proposition~6.2]{TraciallyComplete}.
By \cite[Theorem 1.4]{TraciallyComplete}, the uniform tracial completion $(\completion{A}{X},\hat{X})$ of a C$^*$-algebra $A$ with respect to a non-empty compact face $X \subseteq T(A)$ has CPoU whenever $A$ is $\Z$-stable (or more generally when $A$ has uniform property $\Gamma$).
 We refer the reader to \cite[Section~7]{TraciallyComplete} for further information and examples of CPoU in action.

\subsection{The Cuntz semigroup}\label{subsec:cuntz}

In this subsection, we recall the background on the Cuntz semigroup required to understand the statement and proof of Theorem~\ref{intro-thm:Cuntz}. It is not used elsewhere in this paper. 

Let $A$ be a C$^*$-algebra and let $a,b \in (A \otimes \K)_+$. The Cuntz subequivalence relation $a \precsim b$ is defined to hold precisely when there exists a sequence $(r_n)_{n=1}^\infty$ in $A \otimes \K$ such that $a = \lim_{n\to\infty} r_nbr_n^*$.  Projections are Cuntz subequivalent if and only if they are Murray von Neumann subequivalent by \cite[Proposition 2.1]{Ro92}.
The Cuntz equivalence relation $a \sim b$ is defined to hold whenever $a \precsim b$ and $b \precsim a$.  When $A$ is stably finite, Cuntz equivalence is the same as Murray--von Neumann equivalence. 

The Cuntz semigroup is defined to be the quotient $\Cu(A) = (A \otimes \K)_+ / \sim$. The semigroup operation is induced by the direct sum using that $M_2(\K) \cong \K$. Cuntz subequivalence descends to an order on the semigroup $\Cu(A)$. 

Every increasing sequence $(x_n)_{n=1}^\infty$ in $\Cu(A)$ has a supremum $\sup_n x_n$ by \cite{CEI08}. For $x, y \in \Cu(A)$, the relation $x \ll y$ holds if for every sequence $(y_n)_{n=1}^\infty$ with $y \leq \sup_n y_n$ there exits $n \in \N$ such that $x \leq y_n$. An element $x \in \Cu(A)$ is said to be \emph{compact} if $x \ll x$.
For $a \in (A \otimes \K)_+$, we have $[(a-\epsilon)_+] \ll [a]$, for all $\e > 0$, 
and $[a]$ is compact whenever $a$ is Cuntz equivalent to a projection. If $A$ is stably finite, the converse holds and we can identify the subsemigroup of compact elements of $\Cu(\M)$ with the Murray--von Neumann semigroup $V(\M)$; see \cite[Proposition 3.5]{BrownCiuperca08}.

A Cuntz semigroup is \emph{algebraic} if every element element is the supremum of a increasing sequence of compact elements. In this case, the Cuntz semigroup can be recovered from the subsemigroup $S_c \subseteq \Cu(A)$ of compact elements via Cu-completion; see \cite[Section 5.5]{RFT18}.  A Cuntz semigroup $\Cu(A)$ is \emph{almost unperforated} if $x \leq y$ whenever $(k+1)x \leq ky$ for some $k \in \N_+$. This is well-known to be equivalent to $A$ having strict comparison; see for example \cite[Proposition 3.2]{Ro04}. A Cuntz semigroup $\Cu(A)$ is \emph{almost divisible} if for any $x,x' \in \Cu(A)$ such that $x' \ll x$ and any $k \in \N_+$ there exists $y \in \Cu(A)$ with $ky \leq x$ and $x' \leq (k+1)y$. If $\Cu(A)$ is both almost unperforated and almost divisible, then $A$ is \emph{pure} in the sense of \cite{Wi12}.

\section{Real rank zero}\label{sec:RRO}

In this section, we prove Theorem~\ref{intro-thm:RR0}. We begin with a von Neumann algebraic result that we will apply to each fibre of a tracially complete C$^*$-algebra. In essence, this is a well-known result on 2-norm stability of projections. We simply observe that the usual proof interacts nicely with the ``acts as a unit'' relation.   
 
\begin{lemma}\label{lem:vNA-stab-projection}
    Let $M$ be a finite von Neumann algebra with trace $\tau$. Let $0 < \delta \leq \tfrac{1}{4}$ and $a,p,c \in M_{+,1}$. Suppose  $a \lhd p \lhd c$ and $\|p^2-p\|_{2,\tau} \leq \d$.
    Then there exists a projection $q \in M$ such that $a \lhd q \lhd c$ and
    \begin{equation}
        \|q-p\|_{2,\tau} \leq 2\sqrt{\d}.
    \end{equation}
\end{lemma}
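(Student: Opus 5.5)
The plan is to take $q = \chi_{[1/2,1]}(p)$, the spectral projection of $p$ for the interval $[1/2,1]$, computed via Borel functional calculus in $M$. Three things then need checking: $q$ is close to $p$ in $\|\cdot\|_{2,\tau}$, $a \lhd q$, and $q \lhd c$.

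For the estimate, consider the function $f(t) = \chi_{[1/2,1]}(t) - t$ on $[0,1]$. On $[0,1/2)$ we have $|f(t)| = t \leq 2t(1-t)$, because $1-t \geq 1/2$ there. On $[1/2,1]$ we have $|f(t)| = 1-t \leq 2t(1-t)$, because $t \geq 1/2$. Hence $|f(t)|^2 \leq 4 (t-t^2)^2$ pointwise. Applying the trace to the functional calculus gives
\begin{equation}
\|q-p\|_{2,\tau}^2 \leq 4\|p-p^2\|_{2,\tau}^2 \leq 4\d^2,
\end{equation}
so $\|q-p\|_{2,\tau}\le 2\d$. Since $\d\le 1/4<1$, we have $\d \leq \sqrt{\d}$, and therefore $\|q-p\|_{2,\tau} \leq 2\d \leq 2\sqrt{\d}$.

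For the unit relations, I use the standard fact that for positive contractions, $x \lhd y$ is equivalent to $x$ lying in the corner $e\,M\,e$ with $e$ the spectral projection $\chi_{\{1\}}(y)$. Equivalently, $x \lhd y$ holds if and only if the range projection of $x$ is dominated by $\chi_{\{1\}}(y)$.

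First, $a \lhd p$ gives $\mathrm{supp}(a) \leq \chi_{\{1\}}(p) \leq \chi_{[1/2,1]}(p) = q$. Hence $qa = a = aq$, that is, $a \lhd q$.

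Second, $p \lhd c$ means $pc = p = cp$, so $c$ acts as the identity on the range of $p$. Therefore $\mathrm{supp}(p) \leq \chi_{\{1\}}(c)$. Since $q = \chi_{[1/2,1]}(p) \leq \chi_{(0,1]}(p) = \mathrm{supp}(p)$, we get $q \leq \chi_{\{1\}}(c)$, and hence $qc = q = cq$.

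The only step that needs real care is justifying the support characterization for contractions, namely $xy = x$ with $0\le y\le 1$ implies the range of $x^*$ lies in $\ker(1-y)$; this is immediate since $(1-y)x^*=0$.
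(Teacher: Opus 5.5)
Your proof is correct, and it obtains the norm estimate by a different route from the paper. The paper takes the $\delta$-dependent spectral projection $q=\chi_{[1-\sqrt{\delta},1]}(p)$ and cites \cite[Chapter XIV, Lemma~2.2]{Takesaki3} for $\|q-p\|_{2,\tau}\le 2\sqrt{\delta}$. You instead fix the threshold at $\tfrac12$ and prove the estimate directly from the pointwise inequality $\min(t,1-t)\le 2t(1-t)$ on $[0,1]$, which applies because $p$ is a positive contraction. Your version is self-contained and gives the sharper bound $\|q-p\|_{2,\tau}\le 2\delta$, linear rather than square-root in $\delta$. The hypothesis on $\delta$ enters only through $\delta\le 1$, to pass from $2\delta$ to $2\sqrt{\delta}$. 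The paper's version is shorter because it appeals to a standard reference. Its weaker constant costs nothing downstream, since the tolerances $\epsilon_n$ in Theorem~\ref{thm:projection} are chosen freely. For the unit relations the two arguments agree in substance: $a$ sits under $\chi_{\{1\}}(p)\le q$, and $q$ sits under the support projection of $p$, which is below $\chi_{\{1\}}(c)$. Your explicit inequality $q\le\chi_{(0,1]}(p)$ makes visible that the spectral threshold must be strictly positive. The paper's phrase ``$q\in\mathrm{W}^*(p)$'' leaves this implicit: it must be read as the non-unital von Neumann algebra generated by $p$, since $1\lhd c$ fails in general.
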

\begin{proof}
    Set $q = \chi_{[1-\sqrt{\d},1]}(p)$. By \cite[Chapter XIV, Lemma~2.2]{Takesaki3}, $\|q-p\|_{2,\tau} \leq 2\sqrt{\d}$.\footnote{The estimate in the proof of \cite[Chapter XIV, Lemma~2.2]{Takesaki3} is still valid when $\delta = \tfrac{1}{4}$.}
    Since $p \lhd c$ and $q \in \mathrm{W}^*(p)$, we have $q \lhd c$. Since $a \lhd p$ and $\chi_{[1-\sqrt{\d},1]}(1) = 1$, it follows that $a \lhd q$.
\end{proof}

Next, we perform a standard tracial gluing argument using CPoU to the result of Lemma~\ref{lem:vNA-stab-projection}. The reader can find similar arguments in \cite[Section~7]{TraciallyComplete}. We note that the constant in the estimate of $\|p-q\|$ gets slightly worse, but for our argument the precise value of the constant doesn't matter.

\begin{proposition}\label{prop:RR0-CPoU-1}
    Let $(\M,X)$ be a factorial tracially complete $C^*$-algebra with CPoU.
    Let $\e > 0$ and let $a,p,c \in \M_{+,1}$ with  $a \lhd p \lhd c$. Suppose $\|p^2-p\|_{2,X} \leq \d$ for some $\d \in (0,\tfrac{1}{4}]$.
    Then there exists $q \in \M_{+,1}$ such that
 \begin{equation} \label{eqn:conditions-on-q}
        \|cq-q\|_{2,X} < \e, \quad
        \|qa-a\|_{2,X} < \e, \quad
        \|q^2-q\|_{2,X} < \e, \quad
        \|p-q\|_{2,X} < 4\sqrt{\d}.
 \end{equation}
\end{proposition}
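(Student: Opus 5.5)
The strategy is a standard CPoU tracial gluing argument, run in the ultrapower and then pulled back to $\M$. We first produce local solutions via Lemma~\ref{lem:vNA-stab-projection}, glue them in $\M^\omega$, and finally choose a representative at a suitable index.

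\textbf{Local solutions.} For each $\tau \in X$, pass to the finite von Neumann algebra $\pi_\tau(\M)''$. Here $\pi_\tau(a) \lhd \pi_\tau(p) \lhd \pi_\tau(c)$ and $\|\pi_\tau(p)^2-\pi_\tau(p)\|_{2,\tau} \leq \d$. Lemma~\ref{lem:vNA-stab-projection} gives a projection $q_\tau \in \pi_\tau(\M)''$ with $\pi_\tau(a) \lhd q_\tau \lhd \pi_\tau(c)$ and $\|q_\tau - \pi_\tau(p)\|_{2,\tau} \leq 2\sqrt{\d}$.

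By Kaplansky density (in the form used in \cite[Section~7]{TraciallyComplete}), we approximate $q_\tau$ in $\|\cdot\|_{2,\tau}$ by some $b_\tau \in \M_{+,1}$ satisfying, for a small $\eta>0$,
\begin{equation*}
\|cb_\tau - b_\tau\|_{2,\tau}, \ \|b_\tau a - a\|_{2,\tau}, \ \|b_\tau^2-b_\tau\|_{2,\tau} < \eta, \qquad \|b_\tau - p\|_{2,\tau} < 2\sqrt{\d}+\eta.
\end{equation*}
These are finitely many $\|\cdot\|_{2,\tau}$-open conditions. Positivity and contractivity can be arranged because the positive contractions of $\M$ are $\|\cdot\|_{2,\tau}$-dense in those of $\pi_\tau(\M)''$. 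Set
\begin{equation*}
f_\tau = |cb_\tau-b_\tau|^2 + |b_\tau a-a|^2 + |b_\tau^2-b_\tau|^2 + |b_\tau-p|^2 \in \M_+ .
\end{equation*}
Then $\tau(f_\tau) < 3\eta^2 + (2\sqrt{\d}+\eta)^2$. By weak$^*$-continuity this inequality persists on an open neighbourhood of $\tau$, so compactness of $X$ yields finitely many $b_1,\dots,b_n$ and $f_1,\dots,f_n$ with
\begin{equation*}
\sup_{\tau\in X}\min_j \tau(f_j) < \d' := 3\eta^2+(2\sqrt{\d}+\eta)^2 .
\end{equation*}

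\textbf{Gluing.} Apply CPoU with $S = \{a,p,c,b_1,\dots,b_n\}$ to get orthogonal projections $e_1,\dots,e_n \in \M^\omega\cap S'$ summing to $1$, with $\tau(f_je_j) \leq \d'\tau(e_j)$ for all $\tau \in X^\omega$. Set $q = \sum_j e_j b_j e_j \in \M^\omega_{+,1}$.

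Since the $e_j$ commute with $a,c,b_j$ and are orthogonal, we have $cq - q = \sum_j e_j(cb_j-b_j)$, and similarly for the other expressions. For example,
\begin{equation*}
\tau(|cq-q|^2) = \sum_j \tau\bigl(e_j|cb_j-b_j|^2\bigr).
\end{equation*}
Summing the four quantities gives, for every $\tau \in X^\omega$,
\begin{equation*}
\tau\bigl(|cq-q|^2+|qa-a|^2+|q^2-q|^2+|q-p|^2\bigr) \le \sum_j \tau(e_jf_j) \le \d'.
\end{equation*}
Choosing $\eta$ small makes each $\|\cdot\|_{2,X^\omega}$-quantity less than $\e$ for the first three conditions, and gives $\|q-p\|_{2,X^\omega} \leq \sqrt{\d'} < 4\sqrt{\d}$. (Since $\d>0$, even $3\sqrt{\d}$ is achievable.)

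\textbf{Back to $\M$.} Lift $q$ to a sequence $(q_m)$ of positive contractions in $\M$. Because $\|\cdot\|_{2,X^\omega}$ computes the $\omega$-limit of $\|\cdot\|_{2,X}$ (\cite[Section~5]{TraciallyComplete}), the strict inequalities hold for $q_m$ for $\omega$-many $m$. Picking one such $m$ gives the required element.

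The main obstacle is the local approximation step: producing a positive contraction $b_\tau$ in $\M$ that is simultaneously close to $q_\tau$ and satisfies all the required estimates, using only $\|\cdot\|_{2,\tau}$-density. Once that is in place, the remainder is routine bookkeeping of the CPoU estimates, which is where the weakening of the constant from $2\sqrt{\d}$ to $4\sqrt{\d}$ is absorbed.
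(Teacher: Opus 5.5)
Your architecture (local projections from Lemma~\ref{lem:vNA-stab-projection}, Kaplansky density, compactness, CPoU, lifting) matches the paper's. However, the gluing estimate as you set it up does not give the first three bounds.

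\textbf{The gap.} With the unweighted $f_j = |cb_j-b_j|^2+|b_ja-a|^2+|b_j^2-b_j|^2+|b_j-p|^2$, the best CPoU constant available is $\d' = 3\eta^2+(2\sqrt{\d}+\eta)^2$. This tends to $4\d$, not to $0$, as $\eta\to 0$, because the term $|b_j-p|^2$ genuinely has trace of order $4\d$. From $\sum_j\tau(e_jf_j)\le\d'$ you can only conclude that each of the four summands has trace at most $\d'$. So you get $\|cq-q\|_{2,X^\omega}\le\sqrt{\d'}\approx 2\sqrt{\d}$, and likewise for $qa-a$ and $q^2-q$. CPoU controls only the total of $f_j$ on the support of $e_j$. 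Nothing prevents $e_j$ from sitting where $|b_j-p|^2$ is small but $|cb_j-b_j|^2$ has trace close to $\d'$. Hence your claim that choosing $\eta$ small makes these three quantities less than $\e$ fails whenever $\e\le 2\sqrt{\d}$. That is exactly the regime needed later: Proposition~\ref{prop:RR0-CPoU-2} uses this result with $\e_0\to 0$ and $\d$ fixed. The final remark in your proposal has it backwards: the local approximation is routine, and the CPoU bookkeeping is where the real constraint lies.

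\textbf{The fix.} Weight the terms so that the one term of size $\d$ does not swamp the others. The paper uses
$b_j = |cq_j-q_j|^2+|q_ja-a|^2+|q_j^2-q_j|^2+\tfrac{\e^2}{20\d}|p-q_j|^2$.
It takes local estimates $<\e/4$ for the first three and $\|p-q_j\|_{2,\tau}<\sqrt{5\d}$, so that $\sup_{\tau}\min_j\tau(b_j)<\tfrac{3}{16}\e^2+\tfrac14\e^2<\tfrac12\e^2$. The CPoU output then bounds each of the first three squared $2$-norms by $\tfrac12\e^2$. It also gives $\tfrac{\e^2}{20\d}\|p-h\|_{2,\tau}^2\le\tfrac12\e^2$, i.e.\ $\|p-h\|_{2,X^\omega}\le\sqrt{10\d}<4\sqrt{\d}$. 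More generally, any weight $\lambda\in\bigl(\e^2/(16\d),\,\e^2/(4\d)\bigr)$ on $|b_j-p|^2$ works once $\eta$ is small enough.

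With this change, the rest of your argument goes through. That includes your choice of the finite set $S=\{a,p,c,b_1,\dots,b_n\}$, which is cleaner than commuting with all of $\M$ since CPoU asks for a $\|\cdot\|_{2,X}$-separable set, and your lifting step.
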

\begin{proof}
Let $\tau \in X$. By Lemma~\ref{lem:vNA-stab-projection}, there exists a projection $\bar{q}_\tau \in \pi_\tau(\M)''$ such that $\pi_\tau(a) \lhd \bar{q}_\tau \lhd \pi_\tau(c)$ and $\|\bar{q}_\tau-\pi_\tau(p)\|_{2,\tau} \leq 2\sqrt{\d}$. Applying Kaplansky's density theorem and lifting the resulting positive contraction, we may find $q_\tau \in \M_{+,1}$ such that 
 \begin{equation} 
        \|cq_\tau-q_\tau\|_{2,\tau} < \e, \quad
        \|q_\tau-a\|_{2,\tau} < \e, \quad
        \|q_\tau^2-q_\tau\|_{2,\tau} < \e, \quad
        \|p-q_\tau\|_{2,\tau} < \sqrt{5\d}.
 \end{equation}
By continuity, there is an open neighbourhood $U_\tau$ of $\tau$ in X such that for all $\sigma \in U_\tau$
 \begin{equation} 
        \|cq_\tau-q_\tau\|_{2,\sigma} < \e, \quad
        \|q_\tau-a\|_{2,\sigma} < \e, \quad
        \|q_\tau^2-q_\tau\|_{2,\sigma} < \e, \quad
        \|p-q_\tau\|_{2,\sigma} < \sqrt{5\d}.
 \end{equation}
As $X$ is compact, there exist $n \in \N$ and positive contractions $q_1,\dots,q_n \in \M_{+,1}$  such that, for all $\tau \in X$, there exists $j$ such that
 \begin{equation} \begin{split}
    \|cq_j-q_j\|_{2,\tau}  < \tfrac{\e}{4}, \,
    \|q_ja-a\|_{2,\tau}  < \tfrac{\e}{4}, \,
    \|q_j^2-q_j\|_{2,\tau}  < \tfrac{\e}{4}, \,
    \|p-q_j\|_{2,\tau}  < \sqrt{5\d}.
\end{split} \end{equation}
Define 
\begin{equation} 
b_j= |cq_j-q_j|^2 +  |q_ja-a|^2 + |q_j^2-q_j|^2  + \frac{\e^2}{20\d}|p-q_j|^2, 
\end{equation}
so that
\begin{equation} \sup_{\tau \in X} \min_{j\in\{1,\dots,n\}} \tau(b_j) < \tfrac{1}{2}\e^2. \end{equation}
Applying CPoU, we obtain projections $e_1,\dots,e_n \in \M^\omega \cap \M'$ summing to $1_{\M^\omega}$, such that $\tau(e_jb_j)\leq \tfrac{1}{2}\e^2\tau(e_j)$ for all $j=1,\dots,n$ and all $\tau \in X^\omega$.

Set $h= \sum_{j=1}^n q_je_j \in \M^\omega$ and $b = |ch-h|^2 +  |ha-a|^2 + \d|h^2-h|^2  + \frac{\e^2}{20\d}|p-h|^2$.
Then for $\tau \in X^\omega$, using centrality and orthogonality of the $e_j$, we have
\begin{equation}\label{eqn:cpou-result-sum}
 \tau(b) = \tau\left(\sum_{i=1}^n e_jb_j\right) \leq \sum_{i=1}^n \frac12 \e^2\tau(e_j) = \frac{1}{2}\e^2
 \end{equation}
Since $|ch-h|^2 \leq b$, it follows that $\|ch-h\|_{2,X^\omega}^2 \leq \frac{1}{2}\e^2$. 
Similarly, we deduce from \eqref{eqn:cpou-result-sum} that $\|ha-a\|_{2,X^\omega}^2 \leq \frac{1}{2}\e^2$, $\|h^2-h\|_{2,X^\omega}^2 \leq \frac{1}{2}\e^2$ and $\|p-h\|_{2,X^\omega}^2 \leq 10\d$. 
Lifting $h \in \M^\omega$ to a sequence of positive contractions in $\M$, it follows that there exists an element $q \in \M_{+,1}$ in in this sequence that satisfies \eqref{eqn:conditions-on-q}.
\end{proof}

Now, we improve on the outcome of the previous proposition by showing that the first two estimates in \eqref{eqn:conditions-on-q} can be obtained exactly. The price we pay is a second layer of ``$\lhd$-bounds''.

\begin{proposition}\label{prop:RR0-CPoU-2}
    Let $(\M,X)$ be a factorial tracially complete $C^*$-algebra with CPoU.
    Let $\e > 0$ and let $a',a,p,c,c' \in \M_{+,1}$ with  $a' \lhd a \lhd p \lhd c \lhd c'$.
    Suppose $\|p^2-p\|_{2,X} \leq \d$ for some $\d \in (0,\tfrac{1}{4}]$.
    Then there exists $q \in \M_{+,1}$ such that $a' \lhd q \lhd c'$
    \begin{align}
        \|q^2-q\|_{2,X} \leq \e,\quad
        \|q-p\|_{2,X} \leq 4\sqrt{\d}.
    \end{align}
\end{proposition}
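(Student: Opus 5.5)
Apply Proposition~\ref{prop:RR0-CPoU-1} to $a \lhd p \lhd c$ with a small tolerance $\e_0>0$, to be fixed later in terms of $\e$ and $\d$. This gives $q_0 \in \M_{+,1}$ with
\begin{equation*}
\|cq_0-q_0\|_{2,X}<\e_0,\quad \|q_0a-a\|_{2,X}<\e_0,\quad \|q_0^2-q_0\|_{2,X}<\e_0,\quad \|p-q_0\|_{2,X}<4\sqrt{\d}.
\end{equation*}
We then use the outer elements $a'$ and $c'$ to force exact $\lhd$-relations through a compression-and-correction formula that only changes $q_0$ by a small amount in $\|\cdot\|_{2,X}$.

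First compress into $c'$. Since $c \lhd c'$, set $q_1 = c q_0 c$. This is a positive contraction satisfying $q_1 \lhd c'$, because $c'c = c = cc'$. Because $c$ is a contraction, $\|q_1 - q_0\|_{2,X} \le \|c q_0 c - c q_0\|_{2,X} + \|c q_0 - q_0\|_{2,X}$. Taking adjoints, the first term is bounded by $\|q_0c-q_0\|_{2,X}=\|cq_0-q_0\|_{2,X}<\e_0$, so $\|q_1-q_0\|_{2,X}<2\e_0$.

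Next force $a' \lhd q$. Let $e = \eta_{r,s}(a)$ for suitable $r<s$, or simply use $a$ itself, noting that $a' \lhd a$. Define
\begin{equation*}
q = a + (1-a)^{1/2} q_1 (1-a)^{1/2}.
\end{equation*}
This is positive. It is a contraction because $q_1 \le 1$ gives $q \le a + (1-a) = 1$. Since $a'(1-a) = 0$, we have $a'(1-a)^{1/2} = 0$ by functional calculus, so $a'q = a'a = a'$ and $qa'=a'$. Since $a \lhd c \lhd c'$, we have $a \lhd c'$; also $(1-a)^{1/2}$ commutes with $c'$ and $q_1 \lhd c'$. It follows that $q \lhd c'$, since $c'(1-a)^{1/2}q_1 = (1-a)^{1/2}c'q_1$. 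This uses the fact that $c'$ commutes with $a$, which holds because $a \lhd c'$.

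For the estimates, note that $q_1$ approximately acts as a unit on $a$: $\|q_1 a - a\|_{2,X} \le \|cq_0ca - a\|_{2,X} = \|cq_0 a - a\|_{2,X}$, using $ca=a$. Since $ca = a$, this is at most $\|q_0a-a\|_{2,X} + \|(c-1)q_0a\|_{2,X}$, and the second term is controlled by $\|cq_0-q_0\|_{2,X}$. So $\|q_1a-a\|_{2,X}<2\e_0$. Consequently $q_1$ nearly commutes with $a$ and with $(1-a)^{1/2}$ in $\|\cdot\|_{2,X}$. Here one approximates $t\mapsto(1-t)^{1/2}$ uniformly on $[0,1]$ by a polynomial vanishing suitably at $1$, or better writes $(1-a)^{1/2} q_1 - q_1(1-a)^{1/2}$ via $(1-a)^{1/2}a$-terms. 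This yields
\begin{equation*}
\|q - (a + (1-a)q_1)\|_{2,X}\le f(\e_0),\qquad \|(1-a)q_1 - (q_1 - a)\|_{2,X}<2\e_0,
\end{equation*}
so that $\|q-q_1\|_{2,X}\le f(\e_0)+2\e_0$, where $f(\e_0)\to0$ as $\e_0\to 0$. Then $\|q^2-q\|_{2,X}\le \|q_1^2-q_1\|_{2,X}+3\|q-q_1\|_{2,X}$, which is small, and
\begin{equation*}
\|q-p\|_{2,X} \le \|q_0-p\|_{2,X} + \text{small} < 4\sqrt{\d}+\text{small}.
\end{equation*}

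The main obstacle is precisely this last strictness issue, together with the commutator estimate. Proposition~\ref{prop:RR0-CPoU-1} gives a strict inequality $\|p-q_0\|_{2,X}<4\sqrt\d$, so by compactness-free reasoning we may only fix some $\d'<4\sqrt\d$ with $\|p-q_0\|_{2,X}\le \d'$ if we can extract it. To do so, apply Proposition~\ref{prop:RR0-CPoU-1} with its proof's sharper bound $\sqrt{10\d}<4\sqrt\d$, which leaves a definite gap $4\sqrt\d-\sqrt{10\d}$. Choosing $\e_0$ so that all the error terms are below both $\e$ and this gap gives the claimed non-strict bounds. For the commutator $[(1-a)^{1/2},q_1]$, the cleanest route is the identity $[g(a),q_1]$ for polynomials $g$, bounded by $\deg$-many copies of $\|[a,q_1]\|_{2,X}\le 4\e_0$, together with uniform approximation of $\sqrt{1-t}$ on $[0,1]$. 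These are routine but need to be done with $\e_0$ chosen after the polynomial.
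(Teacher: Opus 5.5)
Your proof is correct. It follows the paper's overall plan: take $q_0$ from Proposition~\ref{prop:RR0-CPoU-1}, then correct it by an explicit formula in $a$ and $c$ that forces $a'\lhd q\lhd c'$ exactly while moving $q_0$ only slightly in $\|\cdot\|_{2,X}$. Your correction formula is different, though, and the difference matters.

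The paper takes $q=c\big((1-a)q_0(1-a)+a\big)c$. With $c$ on the outside, $q\lhd c'$ is immediate and there are no square roots. The estimate is meant to come from expanding $(1-a)q_0(1-a)$ into four terms and replacing each using only $q_0a\approx a$ and $cq_0\approx q_0$, with no commutator estimate.

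You compress first and set $q=a+(1-a)^{1/2}q_1(1-a)^{1/2}$. This costs you two things: you need $c'$ to commute with $a$, and you need a bound on $[(1-a)^{1/2},q_1]$, obtained from $\|[a,q_1]\|_{2,X}\le 4\e_0$ plus polynomial approximation (correctly ordered: polynomial first, then $\e_0$). In exchange, $a+(1-a)=1$ holds exactly, so $q\approx q_1\approx q_0$ is genuinely true.

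In the paper's expansion, the term $aq_0a$ is close to $a^2$ rather than $a$. So the printed formula only gives $q\approx c(q_0+a^2-a)c$. For instance, with $\M=\C$, $a'=0$, $a=\tfrac12$, $p=c=c'=1$, it yields $q\approx\tfrac34$, which is far from a projection. The paper's square-root-free route works once the summand $a$ is replaced by $2a-a^2=1-(1-a)^2$, and it is then shorter than yours. Your square root is an equally valid repair.

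Your handling of the final strict inequality also fixes a point the paper glosses over. You use the bound $\sqrt{10\d}$ (plus an arbitrarily small error) from the proof of Proposition~\ref{prop:RR0-CPoU-1}, so the gap $4\sqrt{\d}-\sqrt{10\d}$ does not depend on $\e_0$. The paper instead says ``taking $\e_0$ sufficiently small'', even though $q_0$ itself depends on $\e_0$.
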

\begin{proof}
    Let $\e_0 > 0$. By Proposition~\ref{prop:RR0-CPoU-1}, there exists an element $q_0 \in \M_{+,1}$ such that
    $\|cq_0-q_0\|_{2,X} \leq \e_0$,
    $\|q_0a-a\|_{2,X} \leq \e_0$, 
    $\|q_0^2-q_0\|_{2,X} \leq \e_0$, and
    $\|p-q_0\|_{2,X} < 4\sqrt{\d}$. 
    
    Set $q = c((1-a)q_0(1-a)+a)c = c(aq_0a-aq_0-q_0a+q_0+a)c$. 
    Then $q \in \M_{+,1}$.
    Since $c \lhd c'$, it is immediate that $q \lhd c'$.
    Since $a' \lhd a \lhd c$, we compute that 
    \begin{equation}
        a'q = (a'q_0a - a'q_0 - a'q_0a + a'q_0 + a')c = a'c = a'.
    \end{equation}
    Hence, $a' \lhd q$.
    Since $\|q_0a-q_0\|_{2,X} \leq \e_0$ and $\|cq_0-q_0\|_{2,X} \leq \e_0$, we obtain the following chain of $\|\cdot\|_{2,X}$-approximations:
    \begin{equation}
        q \approx_{4\e_0} c(a-a-a+q_0+a)c = cq_0c \approx_{2\e_0} q_0.
    \end{equation}
    Hence, $\|q-q_0\|_{2,X} \leq 6\e_0$. From this and $\|q_0^2-q_0\|_{2,X} \leq \e_0$, we obtain the following chain of $\|\cdot\|_{2,X}$-approximations: 
    \begin{equation}
        q^2 \approx_{6\e_0} qq_0 \approx_{6\e_0} q_0^2 \approx_{\e_0} q_0 \approx_{6\e_0} q_0.
    \end{equation}    
    Hence, $\|q^2-q\| \leq 19\e_0$. Finally, we have $\|p-q\|_{2,X} \leq \|p-q_0\|_{2,X} + 6\e_0$ and $\|p-q_0\|_{2,X} < 4\sqrt{\d}$. 
    Taking $\e_0$ sufficiently small, the result follows.   
\end{proof}

The following theorem is the key construction. We inductively build a $\|\cdot\|_{2,X}$-Cauchy sequence of approximate projections using Proposition~\ref{prop:RR0-CPoU-2}, which must have a $\|\cdot\|_{2,X}$-limit. Moreover, we maintain some control over this $\|\cdot\|_{2,X}$-limit using a bi-infinite sequence of ``$\lhd$-bounds''.  
\begin{theorem}\label{thm:projection}
  Let $(\M,X)$ be a factorial tracially complete $C^*$-algebra with CPoU.
  Let $a,b,c \in \M_{+,1}$ with $a \lhd b \lhd c$. 
  Then there exists a projection $p \in \M$ with $a \lhd p \lhd c$.
\end{theorem}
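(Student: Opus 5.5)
We build the projection as a $\|\cdot\|_{2,X}$-limit of approximate projections $p_n$, obtained by iterating Proposition~\ref{prop:RR0-CPoU-2}. First we set up a bi-infinite chain of $\lhd$-bounds between $a$ and $c$. Using functional calculus on $b$, put
\[ a_n = \eta_{s_n,t_n}(b), \qquad c_n = \eta_{r_n,r_n'}(b), \]
where the thresholds in $(0,1)$ are chosen so that the $a_n$ increase in the $\lhd$-order and the $c_n$ decrease:
\[ a \lhd a_1 \lhd a_2 \lhd \cdots \lhd p_0 \lhd \cdots \lhd c_2 \lhd c_1 \lhd c. \]
Concretely, the thresholds for the $a_n$ decrease towards some level $\beta$, those for the $c_n$ increase towards some level $\alpha<\beta$, and $p_0 = \eta_{\alpha',\beta'}(b)$ sits in between. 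The relations $a\lhd a_1$ and $c_1 \lhd c$ are where $a\lhd b\lhd c$ enters. Since $a\lhd b$, $a$ lives on the spectral subspace $\{b=1\}$, so $a\lhd f(b)$ whenever $f(1)=1$. Since $b\lhd c$, $c$ acts as a unit on $f(b)$ whenever $f(0)=0$. All the other relations come from the remark after \eqref{def:eta}. Because $p_0$ is a positive contraction, $\|p_0^2-p_0\|_{2,X}\le 1/4$ holds automatically. To get a small starting defect we can instead start at a later stage, or just take $\d_0=1/4$.

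For the inductive step, suppose $p_n$ is given with $a_{n+1}\lhd p_n\lhd c_{n+1}$ (the $\lhd$-bound at stage $n$) and $\|p_n^2-p_n\|_{2,X}\le \d_n$. Apply Proposition~\ref{prop:RR0-CPoU-2} to the chain $a_{n+1}\lhd a_{n+1}\lhd p_n\lhd c_{n+1}\lhd c_{n+1}$; if repeated entries are not allowed, shift indices by two at each step. Taking $\e=\d_{n+1}$, this gives $p_{n+1}$ with
\[ a_{n+2}\lhd p_{n+1}\lhd c_{n+2}, \qquad \|p_{n+1}^2-p_{n+1}\|_{2,X}\le \d_{n+1}, \qquad \|p_{n+1}-p_n\|_{2,X}\le 4\sqrt{\d_n}. \]
Choosing for instance $\d_{n}=4^{-2^n}$ or any sequence with $\sum\sqrt{\d_n}<\infty$ makes $(p_n)$ a $\|\cdot\|_{2,X}$-Cauchy sequence in the unit ball. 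By Definition~\ref{def:TC} it converges to some $p\in\M_{+,1}$. By $\|\cdot\|_{2,X}$-continuity of multiplication on bounded sets, $\|p^2-p\|_{2,X}=0$, and since $\|\cdot\|_{2,X}$ is a norm, $p$ is a projection.

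It remains to show that $a\lhd p\lhd c$. The relations $a_k\lhd p_n\lhd c_k$ hold for all $n\ge k$, and the map $x\mapsto a_kx-a_k$ is $\|\cdot\|_{2,X}$-continuous on bounded sets. Passing to the limit and using that $\|\cdot\|_{2,X}$ is a norm gives the exact identities $a_kp=a_k$ and $pc_k=p$. In particular $a\lhd a_1\lhd p$ and $p\lhd c_1\lhd c$, so $a\lhd p\lhd c$.

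The main obstacle is the bookkeeping. The $\lhd$-relations must hold exactly at every stage, but Proposition~\ref{prop:RR0-CPoU-2} consumes one layer of bounds on each side at each application. This is why an infinite chain of ever tighter $\eta$-cutoffs is needed, together with care that these cutoffs still leave room for $p_0$ in the middle. The convergence estimates themselves are routine once $\d_n$ decays fast enough.
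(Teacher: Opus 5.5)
Your overall strategy matches the paper's: a $\|\cdot\|_{2,X}$-Cauchy sequence of approximate projections built with Proposition~\ref{prop:RR0-CPoU-2}, with exact $\lhd$-bounds passed to the limit. However, the bookkeeping, which you rightly call the crux, is oriented the wrong way, and the inductive step fails as written.

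Proposition~\ref{prop:RR0-CPoU-2} takes $a'\lhd a\lhd p\lhd c\lhd c'$ and returns $q$ with $a'\lhd q\lhd c'$. It controls $q$ only by the \emph{outer} pair, so every application loosens the bounds by one layer.
\begin{itemize}
\item Applied to your chain $a_{n+1}\lhd a_{n+1}\lhd p_n\lhd c_{n+1}\lhd c_{n+1}$, it would return $a_{n+1}\lhd p_{n+1}\lhd c_{n+1}$, not $a_{n+2}\lhd p_{n+1}\lhd c_{n+2}$. In your chain $a_{n+1}\lhd a_{n+2}$, so the conclusion you claim is strictly stronger than the hypothesis; the proposition can never deliver that.
\item That chain is also not admissible: $a_{n+1}\lhd a_{n+1}$ means $a_{n+1}^2=a_{n+1}$, i.e.\ $a_{n+1}$ is already a projection.
\item The index-shift fallback does not rescue it. Using $a_{k-1}\lhd a_k\lhd p\lhd c_k\lhd c_{k-1}$ moves you from level $k$ to level $k-1$. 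Since your layers accumulate at $p_0$, only finitely many levels lie between level $k$ and $a$, so you get finitely many iterations, not a Cauchy sequence.
\item For the same reason, your final claim that $a_k\lhd p_n\lhd c_k$ for all $n\ge k$ is not available.
\end{itemize}

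The fix is to reverse the orientation so that the infinitely many layers accumulate at the outer ends:
\begin{equation*}
a\lhd\cdots\lhd a_2\lhd a_1\lhd a_0\lhd p_0\lhd c_0\lhd c_1\lhd c_2\lhd\cdots\lhd c.
\end{equation*}
Concretely, as in the paper:
\begin{itemize}
\item take $a_n=\eta_{\alpha_n,\alpha_{n+1}}(b)$ with $(\alpha_n)$ increasing in $(\tfrac12,1)$;
\item take $p_0=\eta_{1/4,1/2}(b)$;
\item take $c_n=\eta_{\gamma_{n+1},\gamma_n}(b)$ with $(\gamma_n)$ decreasing in $(0,\tfrac14)$.
\end{itemize}
Your observations about $f(1)=1$ and $f(0)=0$ still give $a\lhd a_n$ and $c_n\lhd c$ for every $n$.

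At stage $n$, apply the proposition to $a_{n+1}\lhd a_n\lhd p_n\lhd c_n\lhd c_{n+1}$. This gives $a_{n+1}\lhd p_{n+1}\lhd c_{n+1}$ together with the two estimates. At the end, use $a\lhd a_n\lhd p_n\lhd c_n\lhd c$, i.e.\ $ap_n=a$ and $p_nc=p_n$ for every $n$, before passing to the limit. With that change, the rest of your argument goes through as in the paper: summable defects, the limit being a projection, and exactness of the $\lhd$-relations in the limit.
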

\begin{proof}
    Let $\eta_{r,s}$ be the continuous piecewise affine functions defined in  \eqref{def:eta}.
    Let $(\alpha_n)_{n=0}^\infty$ be an increasing sequence of real numbers in the interval $(\tfrac{1}{2},1)$, and let $(\gamma_n)_{n=0}^\infty$ be a decreasing sequence of real numbers in the interval $(0,\tfrac{1}{4})$.
    Set $a_n = \eta_{\alpha_n, \alpha_{n+1}}(b)$, $b_0 = \eta_{1/4,1/2}(b)$ and $c_n = \eta_{\gamma_{n+1},\gamma_n}(b)$. 
    Then  
    \begin{equation}\label{eqn:sequences}
        a \lhd a_{n+1} \lhd a_n \lhd b_0 \lhd c_{n} \lhd c_{n+1} \lhd c
    \end{equation}
    for all $n \in \N_{\geq 0}$.
    Set $\e_n = \tfrac{1}{16^n}$ for $n \in \N$.
    By Proposition~\ref{prop:RR0-CPoU-2} with $p=b_0$ and $\delta=\|b_0^2-b_0\|_{2,X} \leq \frac{1}{4}$ (since $b_0$ is a contraction),\footnote{Of course, one can also prove an (easier) version of Proposition~\ref{prop:RR0-CPoU-2} that doesn't keep track of the distance between $p$ and $q$.} 
    we obtain $p_1 \in \M_{+,1}$ such that
    \begin{equation}
     a_1\lhd p_1 \lhd c_1\quad \text{and} \quad \|p_1^2-p_1\|_{2,X} \leq \e_1. 
    \end{equation}
    Then, by repeated application of Proposition~\ref{prop:RR0-CPoU-2}, we iteratively construct positive contractions $p_2,p_3,\dots$ in $\M$ such that for all $n \geq 2$,
    \begin{align}
        a_n  \lhd p_n &\lhd c_n, \label{eqn:p1}\\
        \|p_n^2 - p_n\|_{2,X} &\leq \e_n, \label{eqn:p2} \\
        \|p_{n-1}-p_{n}\|_{2,X} &\leq 4\sqrt{\e_{n-1}}.\label{eqn:p3}
    \end{align}
    
    Since $\sum_{n=2}^\infty 4\sqrt{\e_{n-1}} < \infty$, it follows from \eqref{eqn:p3} that the sequence $(p_n)_{n=1}^\infty$ is $\|\cdot\|_{2,X}$-Cauchy. As $(\M,X)$ is tracially complete, $(p_n)_{n=1}^\infty$ converges in $\|\cdot\|_{2,X}$-norm to a contraction $p \in \M$.
    Since multiplication of contractions is $\|\cdot\|_{2,X}$-continuous, it follows from \eqref{eqn:p2} that $p$ is a projection.
    From \eqref{eqn:sequences} and \eqref{eqn:p1}, we deduce that $ap_n = a$ and $p_nc = p_n$ for all $n \in \N$. Then, again using that multiplication of contractions is $\|\cdot\|_{2,X}$-continuous, we deduce that $a \lhd p \lhd c$ as required. 
\end{proof}

Finally, we obtain real rank zero, by showing that every hereditary subalgebra has a (not necessarily increasing) approximate unit of projections.
\begin{theorem}[Theorem~\ref{intro-thm:RR0}]
    Let $(\M,X)$ be a factorial tracially complete $C^*$-algebra with CPoU.
    Then $\M$ has real rank zero. 
\end{theorem}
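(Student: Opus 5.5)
We will use the characterisation of real rank zero due to Brown and Pedersen (\cite{BrownPed91}): a C$^*$-algebra has real rank zero if and only if every hereditary C$^*$-subalgebra has an approximate unit of projections (property (HP)). As indicated in the introduction, the approximate unit need not be increasing. It suffices to prove the following: for every hereditary C$^*$-subalgebra $B \subseteq \M$, every $x \in B$ and every $\e>0$, there is a projection $p \in B$ with $\|px-x\|<\e$. Indeed, applying this to finite families (for instance $x = \sum_i x_ix_i^*$) yields a net of projections indexed by finite subsets and tolerances, which is an approximate unit.

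So fix $B$, $x \in B$ and $\e > 0$. Set $y = xx^*/\|x\|^2 \in B_{+,1}$, assuming $x \neq 0$. We may replace $y$ by $\eta_{0,s}(y)$ for small $s$ if needed; here we work directly with $y$. Choose $0<r<s<t$ small and define
\[a = \eta_{s,t}(y), \qquad b = \eta_{r,s}(y), \qquad c = \eta_{r/2,r}(y).\]
By the remark after \eqref{def:eta}, these satisfy $a \lhd b \lhd c$. Each lies in $C^*(y) \subseteq B$, since the functions vanish at $0$.

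Theorem~\ref{thm:projection} then provides a projection $p \in \M$ with $a \lhd p \lhd c$. The projection lies in $B$ because $p = cpc \in \overline{cBc}$, or more simply because $p = cpc$ with $c \in B$ and $B$ is hereditary. Next, $pa = a$ gives $(1-p)y = (1-p)(y - ya)$. Since $\|y - ya\| = \sup_{\lambda\in[0,1]} |\lambda(1-\eta_{s,t}(\lambda))| \leq t$, we obtain $\|(1-p)y\|\leq t$.

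Finally, we apply the C$^*$-identity trick \eqref{eqn:CuteTrick}:
\[\|(1-p)x\|^2 = \|(1-p)xx^*(1-p)\| \leq \|x\|^2\,\|(1-p)y\| \leq \|x\|^2 t.\]
Choosing $t < \e^2/\|x\|^2$ gives $\|px - x\| < \e$.

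No step here is a genuine obstacle, since all the analytic work is contained in Theorem~\ref{thm:projection}. The only points needing care are checking that $p$ actually lies in $B$, which follows from $p \lhd c$ and heredity, and correctly passing from single elements to a genuine approximate unit in Brown--Pedersen's equivalence.
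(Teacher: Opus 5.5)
Your proposal is correct and follows essentially the same route as the paper: you build functional-calculus elements $a \lhd b \lhd c$ inside $B$, apply Theorem~\ref{thm:projection} to get a projection $p$ with $a \lhd p \lhd c$, use heredity to place $p$ in $B$, use the C$^*$-identity trick to control $\|(1-p)x\|$, and finish with Brown--Pedersen's (HP) characterisation. One small slip: a priori $p = cpc$ lies in $c\M c$, not $\overline{cBc}$, and $c\M c \subseteq B$ by heredity, which is exactly your second justification.
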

\begin{proof}
    Let $B$ be a hereditary subalgebra of $\M$.
    Let $x_1,\ldots,x_n \in B$ and $\e > 0$. Set $x = \sum_{i=1}^n x_i^*x_i$.  
    Let $\eta_{r,s}$ be the continuous piecewise affine functions defined in  \eqref{def:eta}.
    Set $\delta = \tfrac{\e}{6}$. 
    Define $a = \eta_{2\delta, 3\delta}(x)$, $b = \eta_{\delta, 2\delta}(x)$, and $c = \eta_{0,\delta}(x)$. Then $a,b,c \in B$ and $a \lhd b \lhd c$.
    Moreover, we have $\|ax-x\| \leq 3\delta$ by functional calculus.
    
    By Theorem~\ref{thm:projection}, there exists a projection $p \in \M$ with $a \lhd p \lhd c$. Since $p \lhd c \in B$ and $B$ is hereditary, it follows that $p \in B$. Using $a \lhd p$ and the triangle inequality, we have
    \begin{equation}
    \begin{split}
        \|x-px\| &\leq \|x-ax\| + \|ax - pax\| + \|p(ax - x)\|\\
            &\leq 3\delta + 0 + 3\delta\\
            &=\e.
    \end{split}
    \end{equation}
    Using the C$^*$-identity, it follows that
    \begin{equation}\label{eqn:CuteTrick}
        \|(1-p)x_i\|^2 = \|(1-p)x_i^*x_i(1-p)\| \leq  \|(1-p)x^*x(1-p)\| = \|(1-p)x\|^2 < \e^2
    \end{equation} 
    for $i=1,\ldots,n$. It follows that $B$ has a (not necessarily increasing) approximate unit of projections. Therefore, $\M$ has real rank zero by \cite[Theorem 2.6]{BrownPed91}.    
\end{proof}

\section{Stable rank one}\label{sec:SR1}

In this section, we shall prove Theorem~\ref{intro-thm:SR1}. The following piece of functional calculus will play a crucial role in our computations, so we introduce a shorthand notation.
\begin{definition}\label{def:perturbation}
Let $A$ be a $C^*$-algebra, $a \in A$ and $\e>0$. Define
\begin{equation} a^{[\e]}= v(|a|-\e)_+, \end{equation}
where $a=v|a|$ is the polar decomposition in the bidual $A^{**}$. Note that $a^{[\e]}$ is a well-defined element of $A$, as it can be written as $af(|a|)$ for a suitable function $f\in C_0((0,\|a\|])_+$. Moreover, $\|a-a^{[\e]}\|\leq \e$.
\end{definition}

Our first lemma is a quantitative version of the well-known result that every element of a finite von Neumann algebra has a unitary polar form. We need to cut down from $a$ to $a^{[\e]}$ because a small perturbation to $a$ that is ``near $0$ in the spectrum'' could get amplified to a large change in the unitary of the polar decomposition.

\begin{lemma}
\label{lem:ControlledPolar}
Let $M$ be a finite von Neumann algebra with a trace $\tau$. Let $\e > 0$.
If $a \in M$ satisfies $\big\|a-|a|\,\big\|_{2,\tau} < \e^2$, then there exists a unitary $u \in M$ such that 
\begin{equation}
\|u-1\|_{2,\tau}<4\e\quad \text{and}\quad 
 a^{[\e]}=u|a^{[\e]}|.
\end{equation}
\end{lemma}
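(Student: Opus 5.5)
The plan is to build $u$ by hand from the polar decomposition of $a$, using finiteness of $M$ to complete a partial isometry to a unitary, and to track $2$-norm estimates at each step. Write $a = v|a|$ for the polar decomposition in $M$, and let $e = \chi_{(\e,\infty)}(|a|)$ be the spectral projection. Then $(|a|-\e)_+ = e(|a|-\e)_+$, so
\begin{equation*}
a^{[\e]} = ve(|a|-\e)_+ \quad\text{and}\quad |a^{[\e]}| = (|a|-\e)_+ .
\end{equation*}
Hence any unitary $u$ with $ue = ve$ satisfies $a^{[\e]} = u|a^{[\e]}|$. The partial isometry $ve$ has initial projection $e$ and final projection $f = vev^*$. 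I will take
\begin{equation*}
u = ve + w,
\end{equation*}
where $w$ is a partial isometry with $w^*w = 1-e$ and $ww^* = 1-f$. Such a $w$ exists because $e \sim f$ and $M$ is finite, so $1-e \sim 1-f$. Everything then reduces to choosing $w$ close to $1-e$ in $\|\cdot\|_{2,\tau}$.

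\emph{Step 1: $ve$ is close to $e$.} We have $a - |a| = (v-1)|a|$. On $eH$ the operator $|a|e$ is bounded below by $\e$, so there is $g \in M$ with $\|g\| \le 1/\e$ and $|a|e\,g = e$. Then
\begin{equation*}
(v-1)e = (a-|a|)e\,g, \qquad\text{so}\qquad \|ve-e\|_{2,\tau} \le \e^{-1}\|a-|a|\|_{2,\tau} < \e .
\end{equation*}
It follows that $f = (ve)(ve)^*$ satisfies $\|f-e\|_{2,\tau} < 2\e$. Since $f = vev^*$, these bounds should be improvable to $\|f-e\|_{2,\tau}<\e$ via $f-e = (ve-e)(ve)^* + e(ve-e)^*$, which is what the final constant will require.

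\emph{Step 2: choose $w$.} I will not take an arbitrary equivalence $1-e \sim 1-f$. Instead, let $x = (1-f)(1-e)$, so that $\|x-(1-e)\|_{2,\tau} = \|(f-e)(1-e)\|_{2,\tau} \le \|f-e\|_{2,\tau}$. Let $x = w_0|x|$ be its polar decomposition, with $w_0^*w_0 \le 1-e$ and $w_0w_0^* \le 1-f$.

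By finiteness (cancellation of Murray--von Neumann equivalence in finite von Neumann algebras), the defect projections $(1-e) - w_0^*w_0$ and $(1-f) - w_0w_0^*$ are equivalent. Choose a partial isometry $w_1$ between them and set $w = w_0 + w_1$. Then $w^*w = 1-e$ and $ww^* = 1-f$.

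To estimate $\|w-(1-e)\|_{2,\tau}$, write $r = 1-e$ and split
\begin{equation*}
w - r = (w_0 - x) + (x - r) + w_1 .
\end{equation*}
\begin{itemize}
\item The middle term is at most $\|f-e\|_{2,\tau}$.
\item For the first term, use $w_0 - x = w_0(w_0^*w_0 - |x|)$ and compare $|x|$ with $r$. One has $r - |x|^2 = r f r = r(f-e)^2 r$, so $\|r - |x|^2\|_{1,\tau} \le \|f-e\|_{2,\tau}^2$, and an operator inequality of the form $(p-|x|)^2 \le p - |x|^2$ converts this into a $2$-norm bound.
\item For $w_1$, $\|w_1\|_{2,\tau}^2 = \tau(r - w_0^*w_0)$, which is the trace of the kernel of $x$ inside $r$. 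This is bounded by $\tau(r(1-|x|^2)r) = \tau(rfr) \le \|f-e\|_{2,\tau}^2$.
\end{itemize}

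Finally, $\|u-1\|_{2,\tau} \le \|ve-e\|_{2,\tau} + \|w-(1-e)\|_{2,\tau}$. Assembling the bounds should give $\|u-1\|_{2,\tau} < 4\e$. The main obstacle is Step 2: controlling $w$ in $2$-norm, especially the defect $w_1$ and the term $w_0 - x$, while keeping the constants small enough to land under $4\e$. If the $4\e$ target is too tight with the crude constants, I would shave them by using the sharper estimate $\|f-e\|_{2,\tau} < \e$ from Step 1, and by bounding $w_0 - x$ via $\tau\big((r-|x|)^2\big) \le \tau(r-|x|^2)$.
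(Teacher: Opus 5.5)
\textbf{There is a genuine gap, in the constant.} Your construction matches the paper's: $u=ve+w$, with $w$ a partial isometry from $1-e$ onto $1-f$. Your Step 1 bound $\|ve-e\|_{2,\tau}<\e$ is exactly the paper's estimate $\|(v-1)p\|_{2,\tau}<\e$. The paper then quotes Takesaki's Lemma XIV.2.1, which gives a unitary $w$ with $wpw^*=q$ and $\|w-1\|_{2,\tau}\le\sqrt2\|p-q\|_{2,\tau}<2\sqrt2\e$. So there the crude bound $\|p-q\|_{2,\tau}<2\e$ already gives $\e+2\sqrt2\e<4\e$.

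Your hand-built $w$ replaces that lemma, but your bookkeeping relies on the claim $\|f-e\|_{2,\tau}<\e$, which is false. The identity $f-e=(ve-e)(ve)^*+e(ve-e)^*$ only gives $2\e$. For a counterexample, take $M=M_2(\mathbb{C})$ with the normalised trace, $\xi=\cos\theta\,e_1+\sin\theta\,e_2$, and $a$ the rank-one operator $e_1\mapsto\lambda\xi$, $e_2\mapsto 0$. Then:
\begin{itemize}
\item $|a|=\lambda e_{11}$, and $e=e_{11}$ when $\lambda>\e$;
\item $v$ is the partial isometry $e_1\mapsto\xi$, and $f$ is the projection onto $\xi$;
\item $\|ve-e\|_{2,\tau}=\sqrt{1-\cos\theta}$ and $\|a-|a|\|_{2,\tau}=\lambda\sqrt{1-\cos\theta}$;
\item $\|f-e\|_{2,\tau}=\sin\theta\approx\sqrt2\,\|ve-e\|_{2,\tau}$.
\end{itemize}
With $\e=0.1$, $\lambda=0.101$ and $1-\cos\theta=0.0096$, the hypothesis $\|a-|a|\|_{2,\tau}<\e^2$ holds, yet $\|f-e\|_{2,\tau}\approx 0.138>\e$. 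Without this claim, your three bullet terms, each bounded by $\|f-e\|_{2,\tau}<2\e$, only give $\|u-1\|_{2,\tau}<7\e$. Even the sharp general bound $\|f-e\|_{2,\tau}\le\sqrt2\|ve-e\|_{2,\tau}$ only gives $(1+3\sqrt2)\e$.

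\textbf{The repair is to avoid going through $\|f-e\|_{2,\tau}$.} All three of your quantities involve only $(f-e)r=fr$: indeed $\|x-r\|_{2,\tau}=\|fr\|_{2,\tau}$ and $\tau(r-|x|^2)=\tau(rfr)=\|fr\|_{2,\tau}^2$. Since $(1-e)f=(1-e)(ve-e)(ve)^*$, you get $\|fr\|_{2,\tau}=\|(1-e)f\|_{2,\tau}\le\|ve-e\|_{2,\tau}<\e$. Then each of your three terms is below $\e$:
\begin{itemize}
\item $\|w_0-x\|_{2,\tau}=\|w_0(r-|x|)\|_{2,\tau}\le\tau(r-|x|^2)^{1/2}<\e$, using $(r-|x|)^2\le r-|x|^2$;
\item $\|x-r\|_{2,\tau}=\|fr\|_{2,\tau}<\e$;
\item $\|w_1\|_{2,\tau}<\e$, using $r-w_0^*w_0\le r-|x|^2$.
\end{itemize}
Hence $\|u-1\|_{2,\tau}\le\|ve-e\|_{2,\tau}+\|w-r\|_{2,\tau}<\e+3\e=4\e$, as required. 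In addition, $w_0-x$ and $w_1$ have orthogonal ranges, so their $2$-norms add in quadrature. This gives $\|w-r\|_{2,\tau}\le 2\|fr\|_{2,\tau}=\sqrt2\|f-e\|_{2,\tau}$ (using $\tau(e)=\tau(f)$), which is exactly the constant in Takesaki's lemma. With this fix, your argument is a self-contained proof of the lemma that the paper cites, and the rest of your construction is sound.
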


\begin{proof}
Let $a \in M$. Let $a=v|a|$ be the polar decomposition of $a$ in $M$, which is also the polar decomposition of $a$ in the bidual $M^{**}$ by uniqueness.  
Set
\begin{equation} p= \chi_{[\e,1]}(|a|),\quad q= \chi_{[\e,1]}(|a^*|) = vpv^*. \end{equation}
Then $p \leq \frac1{\e^2} |a|^2$, so that
\begin{equation}\begin{split}
\ \|(v-1)p\|_{2,\tau}^2 &= \tau\big((v-1)p(v^*-1)\big) \\
&\leq \frac1{\e^2} \tau\big((v-1)|a|^2(v^*-1)\big) \\
&= \frac1{\e^2} \tau\big((a-|a|)(a^*-|a|)\big) \\
&= \frac1{\e^2}\big\|a-|a|\big\|_{2,\tau}^2 \\
&< \frac1{\e^2}(\e^2)^2 = \e^2.
\end{split}\end{equation}
Therefore,
\begin{equation}\begin{split}
 \|p-q\|_{2,\tau} 
&= \|p-vpv^*\|_{2,\tau} \\
&\leq \|p(1-v^*)\|_{2,\tau}+\|(1-v)p\|_{2,\tau}\|v^*\| \\
&<\e+\e=2\e.
\end{split}\end{equation}
By \cite[Chapter XIV, Lemma 2.1]{Takesaki3}, there exists a unitary $w \in M$ such that $wpw^*=q$ and
\begin{equation}
\|w-1\|_{2,\tau} \leq \sqrt2\|p-q\|_{2,\tau} < 2\sqrt2\e.
\end{equation}
Set 
\begin{equation} u= vp + w(1-p) = qv+(1-q)w. \end{equation}
Then
\begin{equation} uu^* = (vp+w(1-p))(pv^*+(1-p)w^*) = vpv^*+w(1-p)w^* = q+(1-q) = 1, \end{equation}
and likewise $uu^*=1$, so that $u$ is unitary.
We also have
\begin{equation} \|u-1\|_{2,\tau} \leq \|(v-1)p\|_{2,\tau} + \|(w-1)(1-p)\|_{2,\tau} < \e+2\sqrt2\e < 4\e \end{equation}
and, since $p|a^{[\e]}|=|a^{[\e]}|=(|a|-\e)_+$, we have
\begin{equation} u|a^{[\e]}| = v(|a|-\e)_+ = a^{[\e]}. \qedhere \end{equation}
\end{proof}

Now, we perform a standard tracial gluing argument using CPoU to the result of Lemma~\ref{lem:ControlledPolar}, similar to the proof of Proposition~\ref{prop:RR0-CPoU-1} and arguments in \cite[Section~7]{TraciallyComplete}. 

\begin{proposition}
\label{prop:ControlledPolarCPoU}
Let $(\M,X)$ be a factorial tracially complete $C^*$-algebra with CPoU. Let $\e_0,\e_1 > 0$.
Suppose $a \in \M$ satisfies $\big\|a-|a|\,\big\|_{2,X} < \e_0^2$. Then there exists a unitary $u \in \M$ such that 
\begin{equation}
\|u-1\|_{2,X} \leq 5\e_0, \quad \quad \big\|a^{[\e_0]}- u|a^{[\e_0]}|\big\|_{2,X} \leq \e_1.
\end{equation}
\end{proposition}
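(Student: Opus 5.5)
The approach is to glue the fibrewise unitaries of Lemma~\ref{lem:ControlledPolar} with CPoU, in the same pattern as the proof of Proposition~\ref{prop:RR0-CPoU-1}, and then lift the glued unitary from $\M^\omega$ back to $\M$.

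\emph{Fibrewise step.} Fix $\tau \in X$ and work in $M_\tau = \pi_\tau(\M)''$. Since $\pi_\tau$ commutes with the continuous functional calculus, we have $\pi_\tau(a^{[\e_0]}) = \pi_\tau(a)^{[\e_0]}$ and $\pi_\tau(|a|) = |\pi_\tau(a)|$. Moreover $\|\pi_\tau(a)-|\pi_\tau(a)|\|_{2,\tau} < \e_0^2$. Lemma~\ref{lem:ControlledPolar} then gives a unitary $\bar u_\tau \in M_\tau$ with $\|\bar u_\tau - 1\|_{2,\tau} < 4\e_0$ and $\pi_\tau(a^{[\e_0]}) = \bar u_\tau |\pi_\tau(a^{[\e_0]})|$.

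\emph{Approximation in $\M$.} We need an honest unitary $u_\tau \in \M$ that is $\|\cdot\|_{2,\tau}$-close to $\bar u_\tau$. Write $\bar u_\tau = e^{i\bar h}$ with $\bar h$ self-adjoint, using Borel functional calculus in the von Neumann algebra. By Kaplansky density and lifting, choose a self-adjoint $h_\tau \in \M$ of norm at most $\pi$ such that $\pi_\tau(h_\tau)$ is $\|\cdot\|_{2,\tau}$-close to $\bar h$. Since $t\mapsto e^{it}$ is uniformly approximable by polynomials on $[-\pi,\pi]$, the map $h\mapsto e^{ih}$ is $\|\cdot\|_{2,\tau}$-continuous on norm-bounded sets. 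Hence $u_\tau = e^{ih_\tau}$ is a unitary in $\M$ with
\[
\|u_\tau-1\|_{2,\tau} < 4\e_0 \quad\text{and}\quad \big\|a^{[\e_0]} - u_\tau|a^{[\e_0]}|\big\|_{2,\tau} < \e_1/2.
\]
Both inequalities are strict, so by continuity in $\tau$ they persist on an open neighbourhood of $\tau$. Compactness of $X$ then yields unitaries $u_1,\dots,u_n$ such that every $\tau\in X$ satisfies both inequalities for some $j$.

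\emph{Gluing.} Set
\[
b_j = |u_j-1|^2 + \tfrac{16\e_0^2}{\e_1^2/4}\,\big|a^{[\e_0]}-u_j|a^{[\e_0]}|\big|^2 .
\]
Then $\sup_{\tau\in X}\min_j \tau(b_j) < 16\e_0^2 + 16\e_0^2 = 32\e_0^2$. Apply CPoU with $S=\{a,u_1,\dots,u_n\}$ and $\delta$ slightly above this supremum. This gives orthogonal projections $e_j \in \M^\omega\cap S'$ summing to $1$, and we put $u = \sum_j u_je_j$. Because the $e_j$ commute with the $u_j$, $u$ is a unitary in $\M^\omega$. Centrality also gives $\tau(|u-1|^2) \le \tau\big(\sum_j e_j b_j\big)$ and a similar bound for the second term, for all $\tau\in X^\omega$. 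The second term yields $\|a^{[\e_0]}-u|a^{[\e_0]}|\|_{2,X^\omega}$ of order $\e_1$. The first yields $\|u-1\|_{2,X^\omega}$ of order $\sqrt{32}\,\e_0 < 5.7\e_0$, which is slightly over $5\e_0$. To fix this, weight the two terms differently, using the slack $4\e_0<5\e_0$: choose a weight $\lambda$ small in front of the second term. Then $\min_j\tau(b_j)$ is essentially bounded by $16\e_0^2(1+\text{small})$, giving $\|u-1\|\le 5\e_0$, while the second estimate is $\lesssim 5\e_0/\sqrt{\lambda}$ times the per-$j$ strictness. To make this small, first choose the fibrewise approximations with accuracy $\e_1'\ll \e_1$ relative to $\lambda$.

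\emph{Lifting and main obstacle.} The main obstacle is returning from $\M^\omega$ to $\M$: we need a genuine unitary in $\M$, not just a sequence of contractions. Here I would use that unitaries in $\M^\omega$ lift to sequences of unitaries in $\M$. One way is to write $u_j = e^{ih_j}$, noting that $u=e^{i\sum_j h_je_j}$, and lift $\sum_j h_je_j$ to self-adjoint elements of norm at most $\pi$; their exponentials represent $u$ by the continuity argument above. Choosing a suitable term of this sequence gives the required $u\in\M$ with both estimates, up to the strict slack built in above.
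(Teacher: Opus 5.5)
Your proposal follows the paper's proof essentially step for step. It applies Lemma~\ref{lem:ControlledPolar} in each fibre, writes $\bar u_\tau=e^{i\bar h_\tau}$ and uses Kaplansky density to get $h_\tau\in\M_{sa}$, uses compactness of $X$, applies CPoU to weighted elements $b_j$, glues $h=\sum_j h_je_j$ in $\M^\omega$, and lifts $h$ to self-adjoints in $\M$ before exponentiating. Two points need fixing.

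\emph{The reweighting.} As written, this step mixes up the roles of the parameters. Shrinking the fibrewise accuracy $\e_1'$ has no effect on the bound $\sqrt{\d/\lambda}\lesssim 5\e_0/\sqrt{\lambda}$ for the second estimate, and taking $\lambda$ ``small'' makes that bound worse. It works the other way round.
\begin{itemize}
\item First take $\lambda$ large enough, e.g.\ $\lambda=25\e_0^2/\e_1^2$. Then any $\d<25\e_0^2$ gives $\d/\lambda<\e_1^2$, together with $\|u-1\|_{2,X^\omega}^2\le\d<25\e_0^2$.
\item Then take $\e_1'$ small enough that $\lambda\e_1'^2<9\e_0^2$. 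This gives $\sup_\tau\min_j\tau(b_j)<16\e_0^2+\lambda\e_1'^2<25\e_0^2$, with the supremum attained by compactness, so such a $\d$ exists with strict slack for the final lift.
\end{itemize}
The paper does exactly this. It takes $b_j=\e_1^2|e^{ih_j}-1|^2+24\e_0^2\bigl|a^{[\e_0]}-e^{ih_j}|a^{[\e_0]}|\bigr|^2$ with fibrewise accuracy $\e_1/2$. CPoU with $\d=22\e_0^2\e_1^2$ then gives the bounds $\sqrt{22}\,\e_0<5\e_0$ and $\sqrt{22/24}\,\e_1<\e_1$.

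\emph{The set $S$.} Put the $h_j$ themselves into $S$, not only $a$ and the $u_j=e^{ih_j}$. Since $\|h_j\|$ may equal $\pi$, $h_j$ need not lie in $C^*(u_j)$, so commuting with $u_j$ does not force $[e_j,h_j]=0$. You need that commutation for $e^{i\sum_j h_je_j}=\sum_j u_je_j$. The paper sidesteps this by taking the $e_j$ in $\M^\omega\cap\M'$.

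With these adjustments your argument is complete.
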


\begin{proof}
Let $\tau \in X$. By Lemma~\ref{lem:ControlledPolar}, there exist a unitary $\bar{u}_\tau \in \pi_\tau(\M)''$ such that 
$\|\bar{u}_\tau-1\|_{2,\tau}<4\e$ and $\pi_\tau(a^{[\e]})=\bar{u}_\tau|\pi_\tau(a^{[\e]})|$. Since $\pi_\tau(\M)''$ is a von Neumann algebra, there is  $\bar{h}_\tau \in \pi_\tau(\M)_{sa}''$ such that $\bar{u}_\tau = e^{i\bar{h}_\tau}$. Using the Kaplansky density theorem and lifting the resulting self-adjoint, we may find $h_\tau \in \M_{sa}$ such that 
\begin{equation}
\|e^{ih_\tau}-1\|_{2,\tau} < 4\e_0, \quad \quad \big\|a^{[\e_0]}- e^{ih_\tau}|a^{[\e_0]}|\big\|_{2,\tau} < \tfrac{1}{2}\e_1.
\end{equation}
By continuity, there is an open neighbourhood $U_\tau$ of $\tau$ in X such that for all $\sigma \in U_\tau$
\begin{equation}
\|e^{ih_\tau}-1\|_{2,\sigma} < 4\e_0, \quad \quad \big\|a^{[\e_0]}- e^{ih_\tau}|a^{[\e_0]}|\big\|_{2,\sigma} < \tfrac{1}{2}\e_1.
\end{equation}
As $X$ is compact, there exist $n \in \N$ and self-adjoints $h_1,\dots,h_n \in \M$ such that, for all $\tau \in X$, there exists $j$ such that
\begin{equation} 
\|e^{ih_j}-1\|_{2,\tau} < 4\e_0, \quad \quad \big\|a^{[\e_0]}-e^{ih_j}|a^{[\e_0]}|\big\|_{2,\tau} <\tfrac{1}{2}\e_1.
\end{equation}
Define 
\begin{equation} b_j= \e_1^2|e^{ih_j}-1|^2 + 24\e_0^2\big|a^{[\e_0]}-e^{ih_j}|a^{[\e_0]}|\big|^2, \end{equation}
so that
\begin{equation} \sup_{\tau \in X} \min_{j\in\{1,\dots,n\}} \tau(b_j) < 16\e_1^2\e_0^2 + 6\e_0^2\e_1^2 = 22\e_1^2\e_0^2. \end{equation}
Applying CPoU with $\delta= 22\e_1^2\e_0^2$, we obtain projections $p_1,\dots,p_n \in \M^\omega \cap \M'$ summing to $1_{\M^\omega}$, such that
\begin{equation} \tau(p_jb_j)\leq 22\e_1^2\e_0^2\tau(p_j), \end{equation}
for all $i=1,\dots,n$ and all $\tau \in X^\omega$.
Set
\begin{equation} h= \sum_{j=1}^n h_jp_j \in \M^\omega. \end{equation}
Then for $\tau \in X^\omega$, using centrality and orthogonality of the $p_j$, we have
\begin{equation}\begin{split}
\|e^{ih}-1_{\M^\omega}\|_{2,\tau}^2 
&= \tau\Big(\big(\sum_{j=1}^n (e^{ih_j}-1)p_j\big)^*\big(\sum_{j=1}^n (e^{ih_j}-1)p_j\big)\Big) \\
&= \tau\Big(\sum_{j=1}^n |(e^{ih_j}-1)|^2p_j \Big) \\
&\leq \frac1{\e_1^2} \sum_{j=1}^n \tau(b_jp_j) \\
&\leq \frac1{\e_1^2} 22\e_1^2\e_0^2 \\
&< 25\e_0^2.
\end{split}\end{equation}
Likewise, for $\tau \in X^\omega$,
\begin{equation}\begin{split}
&\hspace*{-2em} \big\|a^{[\e_0]}-e^{ih}|a^{[\e_0]}|\big\|_{2,\tau}^2 \\
&= \tau\Big(\big(\sum_{j=1}^n a^{[\e_0]}p_j-e^{ih_j}p_j|a^{[\e_0]}\big)^*\big(\sum_{j=1}^n a^{[\e_0]}p_j-e^{ih_j}p_j|a^{[\e_0]}\big)\Big) \\
&= \tau\Big(\sum_{j=1}^n \big|a^{[\e_0]}|-e^{ih_j}|a^{[\e_0]}|\big|^2p_j\Big) \\
&\leq \frac1{24\e_0^2} \sum_{j=1}^n \tau(h_jp_j) \\
&\leq \frac1{24\e_0^2} 22\e_1^2\e_0^2 \\
&< \e_1^2.
\end{split} \end{equation}
This proves that, $\|e^{ih}-1_{\M^\omega}\|_{2,X^\omega}<5\e_0$ and $\big\|a^{[\e_0]}-e^{ih}|a^{[\e_0]}|\big\|_{2,X^\omega}<\e_1$.
By lifting $h$ to a sequence of self-adjoints in $\M$, and using an element $k\in\M$ sufficiently far in this sequence, we therefore have
$\|e^{ik}-1\|_{2,X}\leq 5\e_0$ and  
$\big\|a^{[\e_0]}- e^{ik}|a^{[\e_0]}|\big\|_{2,X}\leq \e_1$.
The conclusion holds upon setting $u= e^{ik}$.
\end{proof}

We now prove stable rank one by showing that any element of $\M$ can be approximated arbitrarily well in the C$^*$-norm by an element with a unitary polar decomposition.
This is an iterative construction based on Proposition~\ref{prop:ControlledPolarCPoU} building a $\|\cdot\|_{2,X}$-Cauchy sequence of unitaries, which must have a $\|\cdot\|_{2,X}$-limit. The C$^*$-norm estimate $\|a-a^{[\e]}\|\leq \e$ plays a crucial role along with the the fact that closed balls in the C$^*$-norm are also $\|\cdot\|_{2,X}$-closed.

\begin{theorem}[Theorem~\ref{intro-thm:SR1}]
Let $(\M,X)$ be a factorial tracially complete $C^*$-algebra with CPoU.
Then $\M$ has stable rank one.
\end{theorem}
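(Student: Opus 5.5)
The approach is to show that every $x \in \M$ is within $\|\cdot\|$-distance $2\e$ of an element with a unitary polar form $y = u|y|$. Such an element is a norm limit of the invertible elements $u(|y|+t)$ as $t \to 0$, so this suffices. The strategy is iterative. We twist $x$ by a sequence of unitaries $u_n$, forming $x_n = u_n^* \cdots u_1^* x$, so that each $x_n$ becomes closer and closer to its modulus in $\|\cdot\|_{2,X}$. Since the $u_n$ are $\|\cdot\|_{2,X}$-close to $1$ with summable errors, the products $w_n = u_1\cdots u_n$ will form a $\|\cdot\|_{2,X}$-Cauchy sequence of unitaries. This sequence converges to a unitary $w$, because the unit ball is complete and the relations $w^*w = ww^* = 1$ pass to $\|\cdot\|_{2,X}$-limits on bounded sets.

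For the first step, we arrange that Proposition~\ref{prop:ControlledPolarCPoU} can be applied at all. For a general $x$ the hypothesis $\|x - |x|\|_{2,X} < \e_0^2$ fails, so a preliminary step is needed. In each fibre, $\pi_\tau(x)$ has a unitary polar form $\bar v_\tau|x|$, so $\bar v_\tau^* x = |x|$. A CPoU gluing of the same shape as in the proof of Proposition~\ref{prop:ControlledPolarCPoU} (writing $\bar v_\tau = e^{i\bar h_\tau}$, lifting the $h_\tau$, covering $X$ by finitely many of them, and then lifting from $\M^\omega$) yields a unitary $u_0 \in \M$ with $\|u_0^*x - |u_0^* x|\|_{2,X}$ as small as desired. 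Here we use that $|u_0^*x| = |x|$. Replacing $x$ by $u_0^* x$ changes neither the norm nor the conclusion, so we may assume from the start that $\|x - |x|\|_{2,X} < \e_0^2$.

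The iteration then runs as follows. Choose $\e_n = \e 2^{-n}$, with the $\delta_n$ summable and very small. At stage $n$ we have $x_n$ with $\|x_n - x\| \le \sum_{k<n}\e_k$ and $\|x_n - |x_n|\|_{2,X} < \e_n^2$. Apply Proposition~\ref{prop:ControlledPolarCPoU} to $x_n$, with $\e_0 = \e_n$ and $\e_1 = \e_{n+1}^2$. This gives a unitary $u_n$ with $\|u_n - 1\|_{2,X} \le 5\e_n$ and $\|x_n^{[\e_n]} - u_n|x_n^{[\e_n]}|\|_{2,X} \le \e_{n+1}^2$. Set $x_{n+1} = u_n^* x_n^{[\e_n]}$. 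Then $\|x_{n+1} - |x_{n+1}|\|_{2,X} \le \e_{n+1}^2$, since $|x_{n+1}| = |x_n^{[\e_n]}|$. Moreover $u_1\cdots u_n x_{n+1}$ differs from $u_1\cdots u_{n-1}x_n$ by at most $\e_n$ in operator norm. Hence $y_n := w_n x_{n+1}$ satisfies $\|y_n - x\| \le \sum \e_k \le 2\e$, and $\|y_n - w_n|x_{n+1}|\|_{2,X} \le \e_{n+1}^2$.

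The main obstacle is passing to the limit, because it mixes the two norms. I would show that $|x_{n+1}|$ is itself $\|\cdot\|_{2,X}$-Cauchy. Write $|x_{n+1}| = (|x_n| - \e_n)_+$ and use that $x_n$ and $|x_n|$ are $\|\cdot\|_{2,X}$-close; the modulus and the cut-downs are moved only by the norm-small functional calculus. So $|x_n| \to h \ge 0$ and $y_n \to y := wh$ in $\|\cdot\|_{2,X}$. The key point is that the closed $\|\cdot\|$-ball of radius $2\e$ about $x$ is $\|\cdot\|_{2,X}$-closed, as the paper emphasises. Since every $y_n$ lies in this ball, so does $y$, that is, $\|y - x\| \le 2\e$. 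Finally, $y = wh$ with $w$ unitary and $h \ge 0$, so $|y| = h$ and $y$ has a unitary polar form. Hence $y$ is a norm limit of invertibles, and $\inv(\M)$ is dense in $\M$. If the Cauchy property of $|x_n|$ proves awkward, an alternative is to apply the ball-closure argument directly to $w_n|x_{n+1}|$, after passing to a $\|\cdot\|_{2,X}$-convergent subsequence constructed alongside.
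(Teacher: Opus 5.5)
Your proposal is correct and is essentially the paper's proof. The shared skeleton is: iterate Proposition~\ref{prop:ControlledPolarCPoU} on the cut-downs $x_n^{[\e_n]}$, telescope the operator-norm estimate for $u_1\cdots u_n x_{n+1}$, take the $\|\cdot\|_{2,X}$-limit of the unitaries, and use that norm balls are $\|\cdot\|_{2,X}$-closed. There are two harmless local variations. First, you produce $u_0$ by a direct CPoU gluing of fibrewise unitary polar forms; the paper instead cites stable rank one of $\M^\omega$ and lifting of unitaries. Second, you pass to the limit through the moduli rather than through $(x_n)$. This is easier than you suggest, because $|x_{n+1}|=(|x_n|-\e_n)_+$ gives $\||x_{n+1}|-|x_n|\|\le\e_n$ in operator norm, so no appeal to closedness of the positive cone is needed.

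The only slip is the inductive claim $\|x_n-x\|\le\sum_{k<n}\e_k$. It is false, since the $u_n$ are only $\|\cdot\|_{2,X}$-close to $1$. It is never used, however, and the estimate you actually rely on later, $\|u_1\cdots u_{n-1}x_n-x\|\le\sum_{k<n}\e_k$, is correct.
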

\begin{proof}
Let $a \in \M$. Without loss of generally assume $\|a\| \leq 1$. Let $\alpha>0$.
Set $\e_n = \frac{1}{2^n}\alpha$ for $n \in \N$. Then $\sum_{n=1}^\infty \e_n = \alpha$ and $\lim_{n\to\infty} \e_n = 0$.

Since $\M^\omega$ has stable rank one (\cite[Corollary~7.15]{TraciallyComplete}) and  unitaries in $\M^\omega$ lift to sequences of unitaries (\cite[Corollary 7.11]{TraciallyComplete}), there exists a unitary $u_0 \in \M$ such that 
\begin{equation} \big\|a - u_0|a|\big\|_{2,X}<\e_1^2. \end{equation}
Set
\begin{equation} a_1= u_0^*a, \end{equation}
and note that $|a_1|=|a|$, so that $\big\|a_1 - |a_1|\big\|_{2,X} < \e_1^2$.
By Proposition~\ref{prop:ControlledPolarCPoU}, there exists a unitary $u_1 \in \M$ such that 
\begin{equation} \big\|a_1^{[\e_1]} - u_1|a_1^{[\e_1]}|\big\|_{2,X} < \e_2^2\quad\text{and}\quad \|u_1 -1\|_{2,X}< 5\e_1. \end{equation}
Set
\begin{equation} a_2 = u_1^*a_1^{[\e_1]}, \end{equation}
so that again $|a_2|=|a_1^{[\e_1]}|$ and thus $\big\|a_2-|a_2|\big\|_{2,X}<\e_2^2$.
Continuing in this fashion, we obtain a sequence of unitaries $u_n \in \M$ such that for all $n \in \N$
\begin{equation} \|u_n-1\|_{2,X} < 5\e_n \end{equation}
and a sequence of contractions $a_n \in \M$ defined recursively by
\begin{equation} a_{n+1}= u_n^*a_n^{[\e_n]}, \end{equation}
such that for all $n \in \N$
\begin{equation} \big\|a_n-|a_n|\big\|_{2,X} < \e_n^2. \end{equation}

For all $n \in \N$, we have
\begin{equation} 
\|u_1\cdots u_nu_{n+1}-u_1\cdots u_n\|_{2,X} \leq \|u_1\cdots u_n\|  \|u_{n+1}-1\|_{2,X} < 5\e_{n+1}. 
\end{equation}
Since $\sum_{k=1}^\infty 5\e_k < \infty$, the sequence of unitaries $(u_1\cdots u_n)_{n=1}^\infty$  is $\|\cdot\|_{2,X}$-Cauchy.
As $(\M,X)$ is tracially complete, it follows that the sequence $(u_1\cdots u_n)_{n=1}^\infty$ converges in $\|\cdot\|_{2,X}$-norm to a contraction $u \in \M$.  Since multiplication of contractions is $\|\cdot\|_{2,X}$-continuous and the adjoint is $\|\cdot\|_{2,X}$-continuous, $u$ must be a unitary.

For each $n \in \N$, we have
\begin{equation} 
\begin{split}
\|a_{n+1}-a_n\|_{2,X} &\leq \|a_{n+1}-a_n^{[\e_n]}\|_{2,X}+\|a_n^{[\e_n]}-a_n\|_{2,X} \\
&\leq \|u_n^*-1\|_{2,X}\|a\| + \|a_n^{[\e_n]}-a_n\|\\
&< 5\e_n+\e_n\\
&= 6\e_n.
\end{split}
\end{equation}
As $\sum_{n=1}^\infty 6\e_n < \infty$, the sequence $(a_n)_{n=1}^\infty$ of contractions is $\|\cdot\|_{2,X}$-Cauchy and therefore converges to a contraction $a_\infty \in \M$.
Moreover, since $\big\|a_n-|a_n|\big\|_{2,X} < \e_n^2$, it follows that $a_\infty \in \M_+$ by \cite[Proposition 3.2(iii)]{TraciallyComplete}.

For each $n \in \N$, we have the following estimate
\begin{equation} \label{eqn:cstar-estimate}
\begin{split}
\|a_1-u_1\cdots u_n a_{n+1}\| &= \|a_1-u_1\cdots u_{n-1}a_n^{[\e_n]}\| \\
&\leq \|a_1-u_1\cdots u_{n-1}a_n\| + \|u_1\cdots u_{n-1}\|\|a_n^{[\e_n]} - a_n\|\\
&\leq \|a_1-u_1\cdots u_{n-1}a_n\| + \e_n.
\end{split} 
\end{equation}
Note that this estimate is valid in C$^*$-norm, not just in the uniform 2-norm.
Applying \eqref{eqn:cstar-estimate} recursively, we obtain $\|a_1-u_1\cdots u_na_{n+1}\|\leq \e_1+\cdots+\e_n<  \alpha$  for all $n \in \N$.
By \cite[Proposition 3.2(ii)]{TraciallyComplete}, the $\|\cdot\|$-closed unit ball of $\M$ is $\|\cdot\|_{2,X}$-closed. Therefore, we have
\begin{equation} 
\|a_1-ua_\infty\|\leq \alpha. 
\end{equation}
Hence,  $\|a-u_0ua_\infty\| \leq \alpha$.
Since $u_0u$ is unitary and $a_\infty$ is positive, the element $u_0u(a_\infty + \alpha 1)$ is invertible, and we have  
$\|a-u_0u(a_\infty + \alpha 1) \| \leq 2\alpha$.
Since $\alpha$ was arbitrary, this completes the proof that $\M$ has stable rank one.
\end{proof}

\section{Applications}\label{sec:applications}

In this section, we explore some applications of Theorems~\ref{intro-thm:RR0} and \ref{intro-thm:SR1}. We begin with a discussion of the trace problem for a factorial tracially complete C$^*$-algebra $(\M,X)$. This problem first appeared in the literature as \cite[Question 1.1]{TraciallyComplete} and asks whether the inclusion $X \subseteq T(\M)$ is in fact an equality. The problem was solved by the first-named author in \cite{Ev25} when $(\M, X)$ is type II$_1$ and has CPoU. Using Theorem~\ref{intro-thm:RR0}, we can provide a simplified proof of this result. 

\begin{theorem}[Theorem~\ref{intro-thm:TP}]
    Let $(\M,X)$ be a type II$_1$ factorial tracially complete C$^*$-algebra with CPoU. Then $T(\M) = X$.
\end{theorem}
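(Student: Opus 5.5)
The plan follows the outline in the introduction: real rank zero reduces the problem to checking traces on projections, and Kadison duality then identifies such a trace with an element of $X$. Let $\tau \in T(\M)$. By Theorem~\ref{intro-thm:RR0}, $\M$ has real rank zero, so every self-adjoint element of $\M$ is a norm limit of self-adjoint elements with finite spectrum, i.e.\ real linear combinations of orthogonal projections. A tracial state is norm continuous. Hence it suffices to find $\tau' \in X$ with $\tau(p) = \tau'(p)$ for every projection $p \in \M$; then $\tau = \tau'$ on $\M_{sa}$, hence on $\M$, and so $\tau \in X$.

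To produce $\tau'$, I would use the description of $K_0(\M)$ for type II$_1$ factorial tracially complete algebras with CPoU. The key claim is that the map $p \mapsto \hat p$, where $\hat p(\sigma) = \sigma(p)$ for $\sigma \in X$, identifies Murray--von Neumann classes of projections in $\M$ (and in matrices over $\M$) with $\Aff(X,\R)$ in the appropriate sense. More precisely, I need two facts:
\begin{enumerate}
\item Projections $p,q$ with $\sigma(p) = \sigma(q)$ for all $\sigma \in X$ are Murray--von Neumann equivalent. This is comparison of projections via CPoU, from \cite[Section~7]{TraciallyComplete}.
\item Every continuous affine $f \colon X \to [0,1]$ is realised as $\hat p$ for some projection $p$. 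This uses type II$_1$ together with CPoU and the projection lifting available in $\M$.
\end{enumerate}
Together these give an isomorphism of ordered groups $K_0(\M) \cong \Aff(X,\R)$, with $[1] \mapsto 1$.

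Now $\tau$ induces a state $\tau_*$ on $(K_0(\M),[1])$ by $[p] \mapsto \tau(p)$. By fact (1) this is well defined, and via the isomorphism it gives a positive unital linear functional $\phi$ on $\Aff(X,\R)$. Kadison duality \cite{Kadison51} says that states on $\Aff(X,\R)$ for compact convex $X$ are exactly point evaluations. So there is $\tau' \in X$ with $\phi(f) = f(\tau')$, that is, $\tau(p) = \tau'(p)$ for all projections $p$. Positivity of $\phi$ needs a short check: if $\hat p \leq \hat q$, then by fact (2) and cancellation from fact (1), $p$ is equivalent to a subprojection of $q$, so $\tau(p) \leq \tau(q)$. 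Linearity follows since $\widehat{p \oplus q} = \hat p + \hat q$, and any $f \in \Aff(X,\R)$ can be shifted and scaled into the range $[0,1]$ realised by projections.

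The main obstacle is establishing facts (1) and (2) at the level of $\M$ itself rather than only in $\M^\omega$. The plan is to obtain them in $\M^\omega$ by CPoU gluing of the fibrewise von Neumann algebra statements, then transfer them to $\M$:
\begin{enumerate}
\item For fact (2), use a $\|\cdot\|_{2,X}$-Cauchy construction as in Theorem~\ref{thm:projection}, or cite that projections and partial isometries in $\M^\omega$ lift to $\M$ (\cite[Section~7]{TraciallyComplete}).
\item For fact (1), either use stable rank one (Theorem~\ref{intro-thm:SR1}) to pass from approximate unitary equivalence to equivalence, or lift partial isometries directly.
\end{enumerate}
If this transfer becomes delicate, I will instead quote the $K_0$ computation from \cite{TraciallyComplete} directly.
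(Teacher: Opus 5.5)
Your proposal is correct and follows the paper's proof: the paper quotes the ordered computation $(K_0(\M),K_0(\M)_+)\cong(\Aff(X,\R),\Aff(X,\R^+_0))$ from \cite[Corollary 7.19]{TraciallyComplete}, which packages your facts (1) and (2) and is your stated fallback, then applies Kadison duality to $\tau_*$ and uses real rank zero, via density of the span of projections \cite[Proposition 14]{Ped80}, to conclude $\tau\in X$. The one step to state more carefully is $\R$-linearity of $\tau_*$: additivity gives only $\mathbb{Q}$-linearity, and homogeneity for real scalars comes from positivity and $\tau_*(1)=1$ by squeezing $\lambda f$ between rational multiples of $f$ (as the paper does), not from ``shifting and scaling''.
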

\begin{proof}
    
    Let $\tau \in T(\M)$. Let $\tau_*:K_0(\M) \rightarrow \R$ be the the induced map on $K_0$.
    By \cite[Corollary 7.19]{TraciallyComplete}, we have $(K_0(\M), K_0(\M)_+) \cong (\Aff(X,\R), \Aff(X,\R_0^+))$ with the isomorphism given by the pairing of $K_0$-classes with tracial states in $X$. In particular, the $K_0$-class of the unit $1_\M$ is mapped to the constant function $1_{\Aff(X,\R)}$. 
    
    We may therefore view $\tau_*$ as an order-preserving group homomorphism $\tau_*:\Aff(X, \R) \rightarrow \R$ such that $\tau_*(1_{\Aff(X,\R)}) = 1$. As $\tau_*$ is a group homomorphism between $\R$-vector spaces, it is automatically $\mathbb{Q}$-linear. Since $\tau_*$ is order preserving and satisfies $\tau_*(1_{\Aff(X,\R)}) = 1$, it follows that $\tau_*$ is $\R$-linear and we have $|\tau_*(f)| \leq \|f\|_{\infty}$ for all $f \in \Aff(X, \R)$.

    Hence, we have shown that $\tau_*$ is a state on the complete Archimedean order unit space $\Aff(X,\R)$. By Kadison duality (\cite{Kadison51}), $\tau_*$ is given by 
    evaluation at some $\tau_0 \in X$.\footnote{Alternatively, extend $\tau_*$ to a $\C$-linear state on $C(X)$ using the Hahn--Banach theorem. This state is given by integration with respect to a Radon probability measure $\mu_0$ on $X$. Take $\tau_0$ to be the barycentre of the measure $\mu_0$.}  It follows that $\tau(p) = \tau_0(p)$ for all projections $p \in \M$. 
    Since $\M$ has real rank zero by Theorem~\ref{intro-thm:RR0}, the linear span of its projections is $\|\cdot\|$-dense in $\M$ by \cite[Proposition 14]{Ped80}. Therefore, $\tau(a) = \tau_0(a)$ for all $a \in \M$. Hence, $\tau \in X$. 
\end{proof}

We now consider consequences of Theorems~\ref{intro-thm:RR0} and~\ref{intro-thm:SR1} for the Cuntz semigroup of a type II$_1$ factorial tracially complete C$^*$-algebra $(\M,X)$; see Section~\ref{subsec:cuntz} for the relevant background on the Cuntz semigroup.

\begin{theorem}[Theorem~\ref{intro-thm:Cuntz}]
        Let $(\M,X)$ be a type II$_1$ factorial tracially complete C$^*$-algebra with CPoU. Then
    \begin{enumerate}
        \item For any $a, b \in \M_+$, $a \precsim b$ if and only for every $\e> 0$ there exists a unitary $u \in \M$ such that $u(a-\epsilon)_+u^* \in \overline{bAb}$.
        \item The Cuntz semigroup $\Cu(\M)$ is algebraic and its compact part is isomorphic to $V(\M) \cong \Aff(X,\R^+_0)$.
        \item The Cuntz semigroup $\Cu(\M)$ is almost unperforated and almost divisible, i.e.\ $\M$ is pure.  
    \end{enumerate}   
\end{theorem}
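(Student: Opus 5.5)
The three parts are handled separately. Each one combines Theorems~\ref{intro-thm:RR0} and~\ref{intro-thm:SR1} with a result from the Cuntz semigroup literature. Throughout, note that $M_k(\M)$ and $\M\otimes\K$ inherit stable rank one and real rank zero. For $M_k(\M)$ this holds because $(M_k(\M), X)$, with traces normalised, is again a factorial tracially complete C$^*$-algebra with CPoU (cf.\ \cite{TraciallyComplete}), or alternatively by the standard permanence results for real rank zero and stable rank one under matrix amplification. Both properties then pass to $\M\otimes\K$, since they are preserved under inductive limits and stabilisation.

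For (1), I would invoke R{\o}rdam's theorem \cite{Ro92}: in a C$^*$-algebra of stable rank one, $a\precsim b$ holds if and only if for every $\e>0$ there is a unitary $u$ in the unitisation with $u(a-\e)_+u^*\in\overline{bAb}$. Since $\M$ is unital, the unitary lies in $\M$. The reverse implication is elementary: $u(a-\e)_+u^*\in\overline{b\M b}$ gives $(a-\e)_+\precsim b$ for all $\e$, hence $a\precsim b$.

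For (2), real rank zero of $\M\otimes\K$ means every hereditary subalgebra, in particular $\overline{a(\M\otimes\K)a}$, has an approximate unit of projections. From this one extracts an increasing sequence of projections $p_n$ with $[p_n]\le[a]$ and $\sup_n[p_n]=[a]$, as in \cite{CEI08} and \cite[Section~5.5]{RFT18}. So $\Cu(\M)$ is algebraic. Stable finiteness holds because $X$ gives faithful traces on matrix algebras, so \cite[Proposition 3.5]{BrownCiuperca08} identifies the compact elements with $V(\M)$. By stable rank one, $V(\M)$ is cancellative and embeds in $K_0(\M)_+$. By \cite[Corollary 7.19]{TraciallyComplete}, $K_0(\M)_+\cong\Aff(X,\R^+_0)$. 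To see that $V(\M)\to K_0(\M)_+$ is surjective, note that every positive class is represented by a projection in some matrix algebra, by the definition of $K_0(\M)_+$. This gives $V(\M)\cong\Aff(X,\R^+_0)$.

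For (3), I would use algebraicity to reduce everything to the ordered monoid $\Aff(X,\R^+_0)$. The key point is that the compact elements form a dense subset in the sense of \cite[Section~5.5]{RFT18}. Almost unperforation then reduces to compact elements via suprema and way-below approximations. On compact elements it is immediate: if $(k+1)f\le kg$ in $\Aff(X,\R^+_0)$, then $f\le \frac{k}{k+1}g\le g$. For almost divisibility, given $x'\ll x$, choose a compact $f$ with $x'\le f\le x$ and set $y=f/k\in\Aff(X,\R^+_0)$. Then $ky=f\le x$ and $x'\le f\le(k+1)y$. The one nontrivial input is that $\Aff(X,\R^+_0)$ is closed under positive scalar multiplication.

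The main obstacle is the passage from compact elements to all of $\Cu(\M)$ in the almost unperforation argument. If $(k+1)x\le ky$ with $x=\sup f_n$ and $y=\sup g_m$, compactness gives $(k+1)f_n\le kg_m$ for some $m$. The subtlety is to ensure the order on compact elements, namely Murray--von Neumann subequivalence, matches the pointwise order on $\Aff(X,\R^+_0)$. This is exactly the statement that the isomorphism in \cite[Corollary 7.19]{TraciallyComplete} is an order isomorphism, combined with cancellation from stable rank one. Once this is checked, $f_n\le g_m\le y$ for all $n$, and hence $x\le y$.
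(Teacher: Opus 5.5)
Your proposal is correct and follows essentially the same route as the paper. Both use R{\o}rdam's characterisation under stable rank one for (1), and algebraicity from real rank zero together with the Brown--Ciuperca identification of the compact part with $V(\M)\cong\Aff(X,\R^+_0)$ for (2). For (3), both use compactness to reduce almost unperforation and almost divisibility to the unperforation and divisibility of $\Aff(X,\R^+_0)$.

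The paper leaves some points implicit that you spell out: permanence of real rank zero and stable rank one under matrix amplification and stabilisation, and the fact that the identification of the compact part with $\Aff(X,\R^+_0)$ is an order isomorphism via cancellation. These additions are correct.
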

\begin{proof}
(i): By Theorem~\ref{intro-thm:SR1}, $\M$ has stable rank one. The claim now follows by \cite[Proposition 2.4(v)]{Ro92}.

(ii): By Theorem~\ref{intro-thm:RR0}, $\M$ has real rank zero. Therefore, $\Cu(\M)$ is algebraic by \cite[Corollary 5]{CEI08}; see also \cite[Section 5.5]{RFT18}. Since $\M$ is stably finite, the subsemigroup of compact elements $S_c \subseteq \Cu(\M)$ can be identified with the Murray--von Neumann semigroup $V(\M)$ using \cite[Proposition 3.5]{BrownCiuperca08} and \cite[Proposition 2.1]{Ro92}. Moreover, $V(\M) \cong \Aff(X,\R_0^+)$ by \cite[Corollary 7.19]{TraciallyComplete}.

(iii): Let $x,y \in \Cu(K)$ and $k \in \N_+$. Suppose $(k+1)x \leq ky$. By (ii), there are increasing sequences of compact elements $(x_n)_{n=1}^\infty$ and $(y_m)_{m=1}^\infty$ in $\Cu(\M)$ such that $x = \sup_n x_n$ and $y = \sup_m y_m$. Fix $n \in \N$. Then $(k+1)x_n \leq ky = \sup_m ky_m$. Since $(k+1)x_n$ is compact, there must exist an $m \in \N$ such that $(k+1)x_n \leq ky_m$. Since $\Aff(X,\R^+_0)$ is unperforated, it follows that $x_n \leq y_m$. By transitivity, $x_n \leq y$. Since $n$ was arbitrary, it follows that $x \leq y$. Therefore, $\Cu(\M)$ is almost unperforated. 
     
Let $x,x' \in \Cu(K)$ and $k \in \N_+$. Suppose $x' \ll x$. By (ii), there is an increasing sequence of compact elements $(x_n)_{n=1}^\infty$ such that $x = \sup_n x_n$. As $x' \ll x$, there exists an $n \in \N$ such that $x' \leq x_n \leq x$. As $\Aff(X,\R^+_0)$ is divisible, there exists a compact $y \in \Cu(\M)$ such that $x_n = ky$. We then have $ky \leq x$ and $x' \leq ky \leq (k+1)y$. Therefore, $\Cu(\M)$ is almost divisible.\qedhere
\end{proof}


\begin{thebibliography}{10}

\bibitem{APLT22}
R.~Antoine, F.~Perera, L.~Robert, and H.~Thiel.
\newblock $\rm{C}^*$-algebras of stable rank one and their {C}untz semigroups.
\newblock {\em Duke Math. J.}, 171(1):33--99, 2022.

\bibitem{RFT18}
R.~Antoine, F.~Perera, and H.~Thiel.
\newblock Tensor products and regularity properties of {C}untz semigroups.
\newblock {\em Mem. Amer. Math. Soc.}, 251(1199):viii+191, 2018.

\bibitem{APTVarxiv}
R.~Antoine, F.~Perera, H.~Thiel, and E.~Vilalta.
\newblock Pure {$\rm{C}^*$}-algebras.
\newblock arXiv:2406.11052, 2024.

\bibitem{BrownPed91}
L.~Brown and G.~Pedersen.
\newblock {$\rm{C}^*$}-algebras of real rank zero.
\newblock {\em J. Funct. Anal.}, 99(1):131--149, 1991.

\bibitem{BrownCiuperca08}
N.~Brown and A.~Ciuperca.
\newblock Isomorphism of {H}ilbert modules over stably finite
  {$\rm{C}^*$}-algebras.
\newblock {\em J. Funct. Anal.}, 257(1):332--339, 2009.

\bibitem{TraciallyComplete}
J.~Carrión, J.~Castillejos, S.~Evington, J.~Gabe, C.~Schafhauser, A.~Tikuisis,
  and S.~White.
\newblock Tracially complete {$\rm{C}^*$}-algebras.
\newblock arXiv:2310.20594, 2023.

\bibitem{classification1}
J.~Carrión, J.~Gabe, C.~Schafhauser, A.~Tikuisis, and S.~White.
\newblock Classifying {$^*$}-homomorphisms {I}: Unital simple nuclear
  {C$^*$}-algebras.
\newblock arXiv:2307.06480, 2023.

\bibitem{CE}
J.~Castillejos and S.~Evington.
\newblock Nuclear dimension of simple stably projectionless
  {$\rm{C}^*$}-algebras.
\newblock {\em Anal. PDE}, 13(7):2205--2240, 2020.

\bibitem{CETW-classification}
J.~Castillejos, S.~Evington, A.~Tikuisis, and S.~White.
\newblock Classifying maps into uniform tracial sequence algebras.
\newblock {\em M\"unster J. Math.}, 14(2):265--281, 2021.

\bibitem{CETW}
J.~Castillejos, S.~Evington, A.~Tikuisis, and S.~White.
\newblock Uniform property {$\Gamma$}.
\newblock {\em Int. Math. Res. Not. IMRN}, 2022(13):9864--9908, 2022.

\bibitem{CETWW}
J.~Castillejos, S.~Evington, A.~Tikuisis, S.~White, and W.~Winter.
\newblock Nuclear dimension of simple {$\rm{C}^*$}-algebras.
\newblock {\em Invent. Math.}, 224(1):245--290, 2021.

\bibitem{CEI08}
K.~Coward, G.~Elliott, and C.~Ivanescu.
\newblock The {C}untz semigroup as an invariant for {$\rm{C}^*$}-algebras.
\newblock {\em J. Reine Angew. Math.}, 623:161--193, 2008.

\bibitem{Ev18}
S.~Evington.
\newblock {\em {$\rm{W}^*$}-Bundles}.
\newblock PhD thesis, University of Glasgow, 2018.
\newblock Available online at http://theses.gla.ac.uk/8650/.

\bibitem{Ev25}
S.~Evington.
\newblock Traces on the uniform tracial completion of {$\mathcal{Z}$}-stable
  {$\rm{C}^*$}-algberas.
\newblock {\em J. London. Math. Soc.}, 111:e70207, 2025.

\bibitem{Fu26}
X.~Fu.
\newblock From stable rank one to real rank zero: A note on tracial approximate
  oscillation zero.
\newblock arXiv:2512.23911, 2025.

\bibitem{HV84}
R.~Herman and L.~Vaserstein.
\newblock The stable range of {$\rm{C}^*$}-algebras.
\newblock {\em Invent. Math.}, 77(3):553--555, 1984.

\bibitem{Kadison51}
R.~Kadison.
\newblock A representation theory for commutative topological algebra.
\newblock {\em Mem. Amer. Math. Soc.}, 7:39, 1951.

\bibitem{KLTV}
G.~Kopsacheilis, H.-C. Liao, A.~Tikuisis, and A.~Vaccaro.
\newblock Uniform property {$\Gamma$} and the small boundary property.
\newblock {\em Trans. Amer. Math. Soc.}, 379(1):487--509, 2026.

\bibitem{MS12}
H.~Matui and Y.~Sato.
\newblock Strict comparison and {$\mathcal{Z}$}-absorption of nuclear
  {$\rm{C}^*$}-algebras.
\newblock {\em Acta Math.}, 209(1):179--196, 2012.

\bibitem{MS14}
H.~Matui and Y.~Sato.
\newblock Decomposition rank of {UHF}-absorbing {$\rm{C}^*$}-algebras.
\newblock {\em Duke Math. J.}, 163(14):2687--2708, 2014.

\bibitem{Oz13}
N.~Ozawa.
\newblock Dixmier approximation and symmetric amenability for {$\rm
  C^*$}-algebras.
\newblock {\em J. Math. Sci. Univ. Tokyo}, 20(3):349--374, 2013.

\bibitem{Ped80}
G.~Pedersen.
\newblock The linear span of projections in simple {$\rm{C}^*$}-algebras.
\newblock {\em J. Operator Theory}, 4(2):289--296, 1980.

\bibitem{Rieffel83}
M.~Rieffel.
\newblock Dimension and stable rank in the {$K$}-theory of
  {$\rm{C}^*$}-algebras.
\newblock {\em Proc. London Math. Soc. (3)}, 46(2):301--333, 1983.

\bibitem{Ro92}
M.~R{\o}rdam.
\newblock On the structure of simple {$\rm{C}^*$}-algebras tensored with a
  {UHF}-algebra. {II}.
\newblock {\em J. Funct. Anal.}, 107(2):255--269, 1992.

\bibitem{Ro04}
M.~R{\o}rdam.
\newblock The stable and the real rank of {$\mathcal{Z}$}-absorbing
  {$\rm{C}^*$}-algebras.
\newblock {\em Int. J. Math.}, 15(10):1065--1084, 2004.

\bibitem{SW25}
G.~Szab\'o and L.~Wouters.
\newblock Equivariant property gamma and the tracial local-to-global principle
  for {${\rm C}^*$}-dynamics.
\newblock {\em Anal. PDE}, 18(6):1385--1432, 2025.

\bibitem{Takesaki3}
M.~Takesaki.
\newblock {\em Theory of operator algebras. {III}}, volume 127 of {\em
  Encyclopaedia of Mathematical Sciences}.
\newblock Springer-Verlag, Berlin, 2003.

\bibitem{Wi12}
W.~Winter.
\newblock Nuclear dimension and {$\mathcal{Z}$}-stability of pure
  {$\rm{C}^*$}-algebras.
\newblock {\em Invent. Math.}, 187(2):259--342, 2012.

\bibitem{Zhang90}
S.~Zhang.
\newblock A property of purely infinite simple {$\rm{C}^*$}-algebras.
\newblock {\em Proc. Amer. Math. Soc.}, 109(3):717--720, 1990.

\end{thebibliography}
\end{document}